\documentclass[reqno,a4paper,12pt]{amsart}
\usepackage{latexsym, amsmath, amssymb, amsthm, a4}

\usepackage{amsfonts, bbold, bbm, marvosym}

\usepackage{mathrsfs}

\usepackage[mathscr]{eucal}
\font\sc=rsfs10 at 12pt

\renewcommand{\a}{\alpha}

\newcommand{\g}{\gamma}
\newcommand{\G}{\Gamma}

\newcommand{\e}{\epsilon}

\renewcommand{\i}{\iota}

\renewcommand{\l}{\lambda}

\newcommand{\m}{\mu}

\newcommand{\x}{\xi}
\newcommand{\X}{\Xi}
\renewcommand{\r}{\rho}
\newcommand{\s}{\sigma}

\newcommand{\vf}{\varphi}
\newcommand{\F}{\Phi}




\newcommand{\R}{{\mathbb R}}

\newcommand{\T}{{\mathbb{T}}}




\newcommand{\ab}{{\mathbf a}}

\newcommand{\kb}{{\mathbf k}}

\newcommand{\mb}{{\mathbf m}}
\newcommand{\nb}{{\mathbf n}}

\newcommand{\rb}{{\mathbf r}}

\newcommand{\tb}{{\mathbf t}}


\newcommand{\Ab}{{\mathbf A}}

\newcommand{\Cb}{{\mathbf C}}

\newcommand{\Hb}{{\mathbf H}}

\newcommand{\Kb}{{\mathbf K}}

\newcommand{\Mb}{{\mathbf M}}
\newcommand{\Nb}{{\mathbf N}}

\newcommand{\Qb}{{\mathbf Q}}

\newcommand{\Sbb}{{\mathbf S}}
\newcommand{\Tb}{{\mathbf T}}
\newcommand{\Ub}{{\mathbf U}}


\newcommand{\AF}{\mathfrak A}

\newcommand{\dF}{\mathfrak d}

\newcommand{\kF}{\mathfrak k}
\newcommand{\KF}{\mathfrak K}

\newcommand{\LF}{\mathfrak L}


\newcommand{\Ac}{{\mathcal A}}
\newcommand{\Bc}{{\mathcal B}}

\newcommand{\Fc}{{\mathcal F}}

\newcommand{\Hc}{{\mathcal H}}

\newcommand{\Mcc}{{\mathcal M}}

\newcommand{\Pc}{{\mathcal P}}

\newcommand{\Uc}{{\mathcal U}} 


\newcommand{\Ks}{\sc\mbox{K}\hspace{1.0pt}}














\newcommand{\diam}{\operatorname{diam\,}}

\newcommand{\Tr}{\operatorname{Tr\,}}



























\newcommand{\beq}{\begin{equation}}
\newcommand{\eeq}{\end{equation}}

\numberwithin{equation}{section}
\numberwithin{figure}{section}


\newtheorem{thm}{Theorem}[section]
\newtheorem{cor}[thm]{Corollary}

\newtheorem{proposition}[thm]{Proposition}
\newtheorem{remark}[thm]{Remark}



\theoremstyle{definition}

\begin{document}

\title[Integral operators]{Spectral estimates and asymptotics  for integral operators on singular sets}

\author{Grigori Rozenblum }

\address{Chalmers Univ. of Technology, Sweden, and  The Euler Intern. Math. Institute}
\email{grigori@chalmers.se}
\thanks{The author was supported  by the Ministry of Science and Higher Education of the Russian Federation, Agreement   075--15--2022--287 .}

\author{Grigory Tashchiyan}
\address{St.Petersburg University for Telecommunications, Russia}
\email{gt$\_$47@mail.ru}

\subjclass[2010]{47A75 (primary), 58J50 (secondary)}
\keywords{Singular measures, Integral operators, Spectral  estimates, Eigenvalue asymptotics}
\dedicatory{To Volodya Maz'ya, with admiration}
\begin{abstract}
For  singular numbers  of integral operators of the form $u(x)\mapsto \int F_1(X)K(X,Y,X-Y)F_2(Y)u(Y)\m(dY),$ with measure $\m$ singular with respect to the Lebesgue measure in $\R^\Nb$, order sharp estimates for the counting function are established.  The kernel $K(X,Y,Z)$ is supposed to be smooth in $X,Y$ and in $Z\ne 0$ and to admit an asymptotic expansion in homogeneous functions in $Z$ variable as $Z\to 0.$ The order in estimates is determined by the leading homogeneity order in the kernel and geometric properties of the measure $\m$ and involves integral norms of the weight functions $F_1,F_2$.  For the case of the measure $\m$ being the surface measure of a Lipschitz surface of some positive codimension $\dF,$ in the self-adjoint case, the asymptotics of eigenvalues of this integral operator is found.
\end{abstract}
\maketitle


\section{Introduction}
Since the pathbreaking papers \cite{BS1}, \cite{BS2}, \cite{BS3} by M.Sh. Birman and M.Z. Solomyak in 1960-s and 1970-s,
it became a general wisdom that order sharp eigen- (and singular-) values estimates for various kind of operators under weakest possible regularity conditions, in terms of integral norms of coefficients, open possibilities to the study of finer characteristics of the spectrum, especially, obtaining the  spectral asymptotics. This approach has been demonstrated many times,  including the most recent developments, see \cite{KLP}, \cite{RConnes}, \cite{RT3}. Still, there are certain types of problems   where such estimates are  not known yet. The present note is devoted to one of such missing cases. We consider integral operators with weak diagonal polarity of the kernel acting in the $L_2$ space with measure $\m$ singular with respect to the Lebesgue measure in $\R^\Nb$ and containing integrable weight functions.  Our main results are new spectral estimates for such operators. They are closely related to recent developments in the topic of spectral estimates for pseudodifferential operators with singular measures, see \cite{RSFA}, \cite{RSLap}, \cite{RConnes}, \cite{RT3}, however the specifics of integral operators enables one to reduce the regularity conditions present in  these papers. In the self-adjoint case,  when the measure $\m$ is the surface  measure on a compact Lipschitz surface of a certain codimension $\dF\ge 1$ in $\R^\Nb$, we find the eigenvalue asymptotics.

Our approach to operators involving singular measures, following \cite{RSLap}, \cite{RT3}, is based upon fundamental trace and embedding theorems by V.G. Maz'ya, see \cite{MazBook}. The authors use this occasion to express their admiration with this outstanding mathematician and to wish him many years of active and productive life.

\subsection{Setting}Let $K(X,Y,Z), \, X,Y\in \R^\Nb, Z\in \R^\Nb\setminus 0,$ be an integral kernel of potential type, smooth in   $X,Y$ variables and in $Z$ for $Z\ne 0$. The kernel is supposed to be polyhomogeneous, this means that as $Z\to 0,$ $K(X,Y,Z),$ expands in the asymptotic series
 \begin{equation}\label{Polyhomog}
   K(X,Y,Z)\sim \sum K_{j}(X,Y,Z), \, j=0,\dots, \, Z\to 0.
 \end{equation}
 The function $K_{j}(X,Y,Z)$ is positively homogeneous in $Z$ of degree $\theta +j$; the leading order is
 $\theta>-\Nb$, so the kernel possesses a weak singularity. If $\theta+j$ is an even non-negative number, the symbol $K_j$ may, in addition to the above  homogeneous function which we denote $K^{(hom)}_j$,  contain an additional term $K^{(log)}_{j}(X,Y,Z)$  of the form $Q_j(X,Y,Z)\log|Z|$, with $Q_j(X,Y,Z)$ being a homogeneous polynomial of degree $\theta+j$ in $Z$ variable and smooth in other variables.
 Note that the representation \eqref{Polyhomog} for a given kernel is not unique: it can be changed by making the   Taylor   expansion  in $X-Y$ variable at the point $X$ or $Y$ and re-grouping the resulting terms. By $\Kb$ we denote the integral operator with kernel $K.$

Let $\m$ be a  compactly supported Borel measure on $\R^\Nb$ with support (the smallest closed set of full $\m$-measure) $\Mcc$ and let $F_1(X), F_2(X)$ be  $\m$-measurable functions on $\Mcc.$ (It is convenient sometimes to assume that $F_\i$ vanish almost everywhere with respect to the measure $\m$.)

In these conditions, we consider the integral operator $\Tb=\Tb[\m,\Kb, F_1,F_2]$ in $L_{2,\m}$
\begin{equation}\label{Tscalar}
    (\Tb u)(X)=\int_{\Mcc}F_1(X)K(X,Y, X-Y)F_2(Y) u(Y)\m(dY),
\end{equation}
or, formally, $(\Tb u)=(F_1\m)\Kb((F_2\m)u).$
More generally, the matrix case in considered, where $K(X,Y,Z)$ is a $\kb\times\kb$ matrix function subject to the above conditions and $F_1(X),F_2(X)$ are $\kb\times\kb$ $\m$-measurable matrix functions; here the operator $\Tb[\m,K, F_1,F_2]$ acts, formally, as \eqref{Tscalar}, but now $u$ is a measurable $\kb$-component vector function on $\Mcc.$ Later we will specify the conditions on the measure $\m$, the kernel $K$ and the weights $F_1,F_2$ granting  boundedness of $\Tb$ in $L_{2,\m}.$ (It is possible in a natural way to consider somewhat more general setting, where $K$, $F_1$, $F_2$ are rectangular matrix-valued functions of proper size, so that the product in \eqref{Tscalar} makes sense, but we avoid this option, leaving it to an interested reader, since it does not involve any new ideas.)

Of special interest is the formally Hermitian case. Here, the kernel $K(X,Y, Z)$ is symmetric in the sense
   \begin{equation*}
    K(X,Y,Z)=K(Y,X,-Z)^*,
   \end{equation*}
where the symbol ${ }^*$ denotes the (complex or, resp.,  matrix) conjugation operation, and $F_1(X)=F_2(X)^*.$
  Under these conditions, the operator $\Tb$ is considered in the space $L_{2,\m};$ it is formally self-adjoint, and if it happens to be bounded, it is self-adjoint in $L_{2,\m}.$

  There are two alternative ways to present results on the properties of integral operators of potential type. On the one hand, one can fix the measure $\m$ and describe how nice the kernel and the weight functions should be. We take a somewhat different course of action: we fix the kernel and determine the properties of the measure and the weight functions that grant the boundedness of the integral operator and the required spectral estimates.

 Let the operator \eqref{Tscalar} be self-adjoint and bounded. In this case, for $\l>0,$ by $n_{\pm}(\l,\Tb)$ we denote the total multiplicity of the spectrum of $\pm\Tb$ in $(\l,\infty)$. If there are infinitely many  such eigenvalues  or there are some points of the essential spectrum in this interval, $n_{\pm}(\l,\Tb)$ is set to  $\infty.$ In the general, not necessarily self-adjoint, case, we study estimates for the \emph{singular numbers} of $\Tb$, i.e., the counting function $n(\l,\Tb):=n_+(\l^2, \Tb^*\Tb).$   We are interested in finding estimates and, if it is possible, asymptotics of these eigenvalue counting functions as $\l\to 0, $ in terms of integral characteristics of the weights $F_1(X), F_2(X),$ of geometric characteristics of the singular measure $\m,$ and, of course, properties  of the kernel $K.$

We restrict ourselves to the case of a compactly supported measure $\m$. Therefore, due to  the standard localization procedure, this problem reduces to the one for operators acting on a set in a smooth compact Riemannian  manifold,  for example, on the torus $\T^\Nb$ with standard metric, the setting we adhere for further on. The homogeneity condition for the kernel is in this case supposed to hold for $Z=X-Y$ close to zero, i.e., near the diagonal $X=Y,$ while the kernel is smooth away from the diagonal. The case of a noncompactly supported measure $\m$ presents certain complications, which we skip in this paper, in order to avoid excessive technicalities.

\subsection{Relation to earlier results }

Spectral problems for weighted weakly polar integral operators were considered in the papers \cite{BS1}, \cite{BS3}, \cite{KostSol}, \cite{BS5}; the main interest being in finding weakest possible conditions on the kernel and  the weights such that the  singular numbers admit the same order estimates as for the non-weighted operator.  In these papers  $\m$ was  the Lebesgue measure on $\R^\Nb$ restricted to the set $\Mcc$. Moreover, an additional factor $\F(X,Y),$ a multiplier, in the kernel of the operator $\Tb$ was present, subject to certain milder regularity restrictions. Is such setting, spectral estimates and asymptotics for $\Tb$ have been established, with order depending on the dimension $\Nb$ and the homogeneity order $\theta.$ For some values of parameters, one of the weights, or both, could be incorporated in the measure, so $\m$ could be any finite Borel measure, not necessarily absolutely continuous with respect to the Lebesgue measure. However, the above spectral estimates had the same order for all admissible weights and measures, and this order depended only on the homogeneity order of the kernel and the dimension $\Nb$ of the space.

  However, certain applications require sharp eigenvalue estimates for
  a more general, singular,  measure $\m.$ For example, $\m$ may be the Hausdorff measure on a fractal set or the surface measure on a nonsmooth surface. In these cases the classical results in the above papers, if applicable, give only '$o$-small' and, therefore, not sharp estimates compared with the absolutely continuous case, so they are   are not applicable, at least directly.
\subsection{Specifics of the new approach and main results}
In the recent papers \cite{RSFA}, \cite{RSLap}, \cite{RConnes}, \cite{RT3}, an approach was developed for obtaining spectral estimates for a class of weighted pseudodifferential operators involving singular measures, the so called Birman-Schwinger type operators. For some cases, in particular for $\m$ being the surface measure on a Lipschitz surface of some positive codimension in $\R^\Nb$, as well as the case of the Hausdorff measure on a uniformly rectifiable set, these estimates are order sharp, which is confirmed by  the  eigenvalue asymptotic formulas  obtained there. In proving these spectral estimates, we use the classical variational approach based upon piecewise-polynomial approximations, initiated by M.Sh. Birman and M.Z. Solomyak more than 50 years ago and  adapted to our singular measure setting. Asymptotic formulas were derived, using perturbation ideas, descendent from  M.Sh. \-Birman and M.Z. \-Solomyak as well; for Lipschitz surfaces, the reasoning was based upon the results by the present authors, \cite{RT1}, \cite{RT2}. One of crucial steps in the reasoning was a reduction of the spectral problem for a pseudodifferential operator with singular weight to an integral operator with singular measure.

In the present paper we apply this relation  to studying the  spectrum  of integral operators with (poly)homogeneous kernel, thus using the above  reduction in the backwards direction. The conditions imposed on the measure $\m$, similarly  to \cite{RT3}, are formulated in terms of inequalities for the $\m$-measure of balls in $\R^\Nb.$ For $0<\a<\Nb,$ we consider three classes of measures:
\begin{equation}\label{a+}
 \mu\in \Pc^\a_{+} \,\, \mbox{if}\,\,  \m(B(X,r))\le \Ac(\m) r^\a,\, r>0;
\end{equation}
\begin{equation}\label{a-}
   \mu\in \Pc^\a_{-} \,\, \mbox{if}\, \m(B(X,r))\ge \Bc(\m) r^\a,\, \, 0<r<\diam \Mcc;
\end{equation}
\begin{equation}\label{a}
  \mu\in\Pc^{\a} \, \mbox{if}\,\, \Bc(\m) r^\a\le \m(B(X,r)) \le \Ac(\m)r^\a, \,\, 0<r<\diam \Mcc;
\end{equation}
here $B(X,r)$ is the (open) ball with radius $r$ and center $X.$

Depending on the homogeneity order of the kernel and the properties of the singular measure   $\m$, we obtain estimates of singular numbers involving certain  integral norms of the weight functions.
Unlike the case of absolutely continuous measures, considered earlier in \cite{KostSol}, \cite{BS5}, and \cite{BS1} where the order of  estimates was determined only by the dimension $\Nb$ and the leading homogeneity order $\theta$ of the kernel,  in our case the order (the power of $\lambda$) of the eigenvalue estimate depends also on the exponent $\a$ in the characteristic \eqref{a+}, \eqref{a-} or \eqref{a}. The choice of which of these conditions is present in  the corresponding formulations is determined by the relation between the dimension $\Nb$ and the order of singularity $\theta$. Note, especially, that when the two-sided condition    \eqref{a} is imposed, a rather special case of the homogeneity degree $\theta=0$ is covered.  Here the leading term in the kernel $K(X,Y,Z)$ is the sum of two terms, one of which, $K_0^{hom},$ is zero order positively  homogeneous in $Z$ for small $Z$ and the other, $K_{0}^{log},$ has the form $Q_0(X,Y)\log |Z|$ with smooth function $Q_0$. In this case, the order of eigenvalue estimates (and asymptotics, when it is proved) equals $-1$ and does not depend on $\a$ in \eqref{a}, the effect noticed earlier in \cite{RSLap}, for the corresponding  spectral problem for pseudodifferential operators, see also the discussion in \cite{RConnes} concerning the noncommutative integration with respect to singular measures.

A special feature of the reduction to  a pseudodifferential problem is the need for smoothness requirements, inherent to the pseudodifferential analysis. For integral operators considered in the paper, such smoothness requirements turn out to be excessive and they are considerably relaxed by means of introduction of  Schur multipliers, similar to  how this was done in the classical papers by M.Sh. \-Birman and M.Z. \-Solomyak for absolutely continuous measures. In proving spectral estimates and asymptotics, we use the additional flexibility provided by the presence of weight functions. When smooth, they can be incorporated in the kernel $K(X,Y,X-Y),$ while, in the opposite, a non-smooth factor in the kernel can be incorporated in the weight functions $F_1,F_2$.  All this enables us to establish the eigenvalue estimates and asymptotics under somewhat milder regularity conditions than in \cite{RT3} in the pseudodifferential setting.

\section{Pseudodifferential and integral operators: reduction}
\subsection{Initial definitions} It is known that an integral operator with homogeneous kernel which is  smooth away from the diagonal, can be understood as a pseudodifferential operator, and vice versa, see, e.g., \cite{BS3}, \cite{RT1}, \cite{Taylor} and references therein.

Having a polyhomogeneous, order $-l<0$, classical symbol $\kF(X,\X)$ admitting
the expansion in homogeneous functions,
    \begin{equation*}
    \kF(X,\X) \sim\sum_{j=0}^{\infty}\kF_j(X,\X), \X\to\infty, \, X\in\R^\Nb, \X\in \R^\Nb\setminus\{0\},
    \end{equation*}

with positively homogeneous terms $\kF_j(X,t\X)=t^{-l-j}\kF_j(X,\X), \, t>0,$
the pseudodifferential operator $\KF(X,D_X)$ with this symbol is defined by the usual local formula
\begin{equation*}
(\KF(X,D_X) u)(X)=\Fc_{\X\to X}^{-1}\kF(X,\X)\Fc_{Y\to\X}u(Y),
\end{equation*}

where $\Fc$ is the Fourier transform.

This operator can be equivalently represented as the
 integral operator with kernel $K(X,Y,X-Y)$ admitting the asymptotic expansion
 \begin{equation}\label{K_j}
    K(X,Y,X-Y)\sim \sum_{j=0}^{\infty} K_{j}(X,X-Y), \, X-Y\to 0
 \end{equation}
in homogeneous or $log$-homogeneous functions.
The kernel $K_j(X,(X-Y))$ is the, properly regularized, Fourier transform of the symbol $\kF_j(X,\X)$
in $\X$ variable. See \cite{RT1},  \cite{Taylor} for details and further references. Note again that the representation \eqref{K_j} is not uniquely defined:   by re-expanding the kernel   $K(X,Y, X-Y)$ at the point $Y=X$  in $X-Y$ variable and further re-grouping  the resulting terms, we arrive at a different composition of the kernel. This relation goes through without complications as long as the symbol and the kernel are infinitely smooth. There is an extensive, rather technically advanced, literature devoted to the treatment of the case of symbols with finite smoothness, see \cite{Abels}, \cite{TaylorBook} and references therein. Our approach, based upon the study of finitely smooth integral kernels is more elementary.

The point of our interest is operators of the form $\Tb=\Tb[\m,\KF,F_1,F_2]$ formally described as $\Tb=(F_1\m)\KF(F_2\m)$,
where $\m$ is a singular measure and $F_1,F_2$ are $\m$-measurable functions. We need to explain how such operators are rigorously defined. The definition will be based upon a fixed factorization of $\KF$ as $\KF=\KF_1\KF_2,$ where $\KF_\i$ are pseudodifferential operators of order $-\g_\i<0,$ $\g_1+\g_2=l.$
The case of our special interest is the one where the pseudodifferential operator $\KF$ is self-adjoint, non-negative, and  has the form $\KF=\LF\LF^*$ where $\LF$ is an order $-\g=-\frac{l}{2}<0$ pseudodifferential operator.

\subsection{Definition of operators}
Following  \cite{RSLap} and \cite{RT3}, we distinguish three cases, where the conditions on the measure $\m$ and the weights $F_\i$ are formulated in different ways. These cases are determined by the  relation between  the order $\g$ and dimension $\Nb$.
\begin{itemize}
\item Subcritical, $2\g_\i<\Nb$; here the measure $\m$ is supposed to belong to $\Pc^\a_+$ with $\a>\Nb-2\g_\i$.
\item Critical, $2\g_\i=\Nb$; here the measure $\m$ is supposed to belong to $\Pc^\a$ with some $\a,$  $0<\a<\Nb$;
\item Supercritical, $2\g_\i>\Nb$; here the measure $\m$ is supposed to belong to $\Pc^\a_-$,  with some $\a>0$.
\end{itemize}

Since $\KF$ is, in fact, an integral operator with polyhomogeneous kernel $K(X,Y,X-Y),$ $\Tb$  can be, still formally,  described as the integral operator
\begin{equation}\label{Operator TKFint}
    (\Tb u)(X)=(\Tb[\m,\KF,F_1,F_2]u)(X)=F_1(X)\int K(X,Y,X-Y)u(Y)F_2(Y)\m(dY)
\end{equation}
We explain now, how this operator is rigorously defined  and in the next section we find conditions for it to be bounded.
Our reasoning, although seemingly arbitrary, follows the  natural rule: if all objects are rigorously  defined, the resulting construction coincides with the the one obtained by formal manipulations.

First, let the pseudodifferential operator $\KF$ be factorized as
    \begin{equation*}
    \KF=\KF_1\KF_2,
    \end{equation*}
where $\KF_1,\KF_2$ are pseudodifferential operators of negative orders, respectively,  $-\g_\i<0,$ $\g_1+\g_2=l.$

Suppose that we have defined \emph{bounded} operators
\begin{equation*}
 \Tb_1=F_1\tb_\Mcc\KF_1: L_2(\T^\Nb)\to L_{2,\m}, \, \Tb_2= {F_2}^*\tb_\Mcc\KF_2^*:L_2(\T^\Nb)\to L_{2,\m},
\end{equation*}

where $\tb_\Mcc$ is the restriction operator of functions in $H^{\g_\i}()$ to $\Mcc.$
  Then the operator $\Tb$ in $L_{2,\m}$
will be defined as
\begin{equation*}
\Tb[\m,\KF,F_1,F_2]=\Tb_1\Tb_2^*.
\end{equation*}

Further on, once we have found \emph{some} singular numbers estimates for $\Tb_\i,$ we can apply the Ky Fan inequality
\begin{equation}\label{KyFan}
 n(\l_1\l_2, \Tb_1\Tb_2^*)\le n(\l_1,\Tb_1)+n(\l_2,\Tb_2)
\end{equation}
 with conveniently chosen $\l_1,\l_2,$ $\l_1\l_2=\l$, to find singular numbers estimates for $n(\l,\Tb)$.

Thus, we are reduced to the task of defining bounded operators $\Tb_\i,$ $\i=1,2,$ and proving their singular numbers estimates.
This topic, actually, has been already covered by the considerations in \cite{MazBook} and further in \cite{RSLap} and \cite{RT3}. In fact,  the boundedness of
$\Tb_\i$ is equivalent to the boundedness of the operator $\Sbb_\i=\Tb_\i^*\Tb_\i$ in $L_2(\T^\Nb).$ The latter operator has the quadratic form
\begin{gather}\label{Sb_1}
    (\Sbb_\i f,f)_{L_2(\T^\Nb)}=(\Tb_\i^*\Tb_\i f,f)_{L_2(\T^\Nb)}=(\Tb_\i f,\Tb_\i f)_{L_{2,\m}}\\\nonumber=\int |F_\i(X)|^2|(\KF_\i f)(X)|^2\m(dX).
\end{gather}
Operators of this type were considered in \cite{RSLap} and \cite{RT3}. There,  conditions have been established for  boundedness of the quadratic form \eqref{Sb_1}, thus justifying the reasoning above, and the spectral estimates for the corresponding operators.
To understand the action of the operator $\Tb_\i$, we consider its sesquilinear form for $u\in L_2(\T^\Nb), v\in L_{2,\m}:$

\begin{equation}\label{TjSes}
    (\Tb_\i u, v)_{L_{2,\m}}=(F_\i \tb_{\Mcc}\KF_\i u, v)_{L_{2,\m}}=(\tb_{\Mcc}\KF_\i u, F_\i^* v)_{L_{2,\m}}.
\end{equation}
For $u\in L_2(\T^{\Nb})$, the function $\KF_\i u $ belongs to the Sobolev space $H^{\g_\i}(\T^{\Nb}).$ So, to assign sense to the expression in \eqref{TjSes}, we need to understand $F_\i^* v$ as an element in the adjoint space $H^{-\g_\i}(\T^{\Nb})$ of distributions. In the supercritical case $2\g_\i>\Nb,$ the space $H^{\g_\i}(\T^{\Nb})$ is embedded in $C(\T^{\Nb})$. Therefore, for $F_\g^*\in L_{2,\m},$ the product $F_\i^*v$ belongs to $L_{1,\m},$ and such function defines a continuous functional on $C(\T^\Nb),$ which means that this is an element in $C(\T^\Nb)'\subset H^{-\g_\i}(\T^{\Nb}).$ In the subcritical case $2\g_\i<\Nb,$ there is no embedding of the Sobolev space $H^{\g_\i}(\T^{\Nb})$ into $C(\T^\Nb),$ so, directly, the restriction of $f=\KF_\i u$ to the support of $\m$ does not make sense.  However,  for the measure $\m$ in $\Pc^\a_+$ with some $\a>\Nb-2\g_\i$ and  for $F_\i^*\in L_{2\s_\i},$ with $2\s_\i=\frac{\a}{2\g_\i-\Nb+\a},$  for any fixed $v\in L_{2,\m}$, the functional
 \begin{equation}\label{Functional}
 \psi_{F_\i^*\overline{v(X)}\m}(f)=\int f(X) (F_\i^*(X)\overline{v(X)}) \m(dX),
 \end{equation}
 defined initially on $H^{\g_\i}(\T^{\Nb})\cap C(\T^\Nb),$ is continuous in $H^{\g_\i}(\T^{\Nb})$-norm by the trace Theorem 11.8  in \cite{MazBook} and thus can be extended by continuity to the whole of $H^{\g_\i}(\T^{\Nb})$.

  A similar reasoning goes through in the critical case $2\g_\i=\Nb.$ There, if $\m$ is in $\Pc^\a_+$ with \emph{some} $\a>0,$ the functional \eqref{Functional}, defined first on \emph{continuous } functions in $H^{\g_\i}(\T^{\Nb}),$ extends to a continuous functional on
on $H^{\g_\i}(\T^{\Nb})$, as soon as $|F_\i^*|^2$ belongs to the Orlicz space $L \log L(\m),$ again by the trace Theorem 11.8 and Corollary 11.8 (2) in \cite{MazBook}. In both  latter cases, by duality, for a fixed $f\in H^{\g_\i}(\T^\Nb),$ the expression \eqref{Functional} defines a continuous antilinear functional of $v\in L_{2,\m},$ therefore, $F_\i \tb_{\Mcc}f,$ thus defined, is an element in $L_{2,\m}.$

In all three cases the operator $\Tb_\i$  can be represented by the diagram
\begin{equation*}
    \Tb_\i: L_2(\T^{\Nb})\overset{\KF_\i}{\longrightarrow}H^{\g_\i}(\Tb^{\Nb})\overset{F_\i\tb_{\Mcc}}{\xrightarrow{\hspace*{1cm}}}L_{2,\m},
\end{equation*}
with all arrows determining continuous operators, therefore, $\Tb=\Tb_1\Tb_2^*$ is a bounded operator in $L_{2,\m}.$

\section{Singular numbers estimates}
\subsection{Singular numbers estimates for $\Tb_\i$}
Here we use the fact that the singular numbers of $\Tb_\i,$ i.e., the eigenvalues of the operator $(\Tb_\i\Tb_\i^*)^{\frac12}$ in $L_{2,\m},$ coincide with eigenvalues of $(\Tb_\i^*\Tb_\i)^\frac12$ in $L_2(\T^{\Nb}).$ The latter operators have been considered in
  \cite{RSLap}, \cite{RT3}. According to the results in these papers, the order  $\l^{-2\s_\i}$ of the spectral estimates for the operator $\Tb_\i=\Tb[\m,\KF_\i,F_\i]$ is determined by the parameters $\g_i,\Nb,\a,$
 namely, $\s_\i=\frac{\a}{2\g_\i-\Nb+\a}$, so, $\s_\i>1$ in the subcritical case, $\s_\i=1$ in the critical case, and $\s_\i<1$ in the supercritical case.
 The assertion to follow is  the combination of Theorem  2.3 in \cite{RSLap} (for the critical case) and Theorems  3.3 and Theorem 3.8 in \cite{RT3} (for the noncritical cases.) Namely, the operator $\Tb_\i^*\Tb_\i,$ having the form $\left((F_\i\m)\tb_{\Mcc}\KF_\i\right)^*\left((F_\i\m)\tb_{\Mcc}\KF_\i\right) ,$ thus defined in $L_2(\T^\Nb)$ by the quadratic form \eqref{Sb_1}, exactly fits in the setting of these theorems, with $V=|F_\i|^2.$ We collect the corresponding results.

 \begin{thm}\label{Th.Basic.Estimate}
 Let the measure $\m$ satisfy conditions \eqref{a+}, \eqref{a} or \eqref{a-}. Suppose that  for the weight function $F_\i$, the function $|F_\i|^2$ belongs to the space $L_{\s_\i,\m}$ in the subcritical case, to $L_{1,\m}$ in the supercritical case, and the Orlicz class $L^{\Psi,\m},$ $\Psi(t)=(1+t)\log(1+t)-t,$ in the critical case. Then the operator $\Tb_\i=\Tb[\m,\KF_\i,F_\i]$ is bounded  as acting from $L_2(\T^\Nb)$ to $L_{2,\m}$ and the following estimates hold for the singular numbers of $\Tb_\i$

\begin{gather}\label{estimate j1}
    n(\l,\Tb_\i)\equiv n(\l^2,\Tb_\i^*\Tb_\i)\lesssim \Cb_{sub}(\m, \KF_\i,F_\i)\l^{-2\s_\i};\,\\\nonumber \Cb_{sub}(\m, \KF_\i,F_\i) =C_\i\||F_\i|^2\|_{L_{\s_\i,\m}}^{\s_\i} ,\, \mbox{\rm{for}} \, \s_\i>1;
    \end{gather}

\begin{gather}\label{estimate j2}
    n(\l,\Tb_\i)\lesssim \Cb_{crit}(\m, \KF_\i,F_\i)\l^{-2};\,\\\nonumber \Cb_{crit}(\m, \KF_\i,F_\i)=C_\i\||F_\i|^2\|^{\Psi,\m,av}, \, \mbox{\rm{for}} \, \s_\i=1;
   \end{gather}
   \begin{gather}\label{estimate j3}
    n(\l,\Tb_\i)\lesssim \Cb_{sup}(\m, \KF_\i,F_\i)\l^{-2\s_\i};\,\\\nonumber \Cb_{sup}(\m, \KF_\i,F_\i) = C_\i\||F_\i|^2\|^{\s_{\i}}_{L_{1,\m}} \m(\Mcc)^{2-2\s_\i}\l^{-2\s_\i}, \, \mbox{\rm{for}} \, \s_\i<1.
\end{gather}
Here, the notation $a(\l)\lesssim b(\l)$ means that $\limsup_{\l\to 0} a(\l)b(\l)^{-1}\le 1$;  in \eqref{estimate j2} $\|.\|^{(\Psi,\m,av)}$ is the averaged  $\Psi$-Orlicz norm with respect to the measure $\m$ (see the definition in \cite{RSLap}, \emph{(2.1)}).
The constant $C_\i$ in \eqref{estimate j1}-\eqref{estimate j3} depends on the dimension $\Nb$, the orders $-\g_\i $ of operators $\KF_\i$,  the characteristic $\a$ of the measure $\m$, on the constants $\Ac(\m),$ $\Bc(\m)$ in \eqref{a-}, \eqref{a}, \eqref{a+}, as well, of course, as on the operator $\KF_\i$, but \emph{not} on the weight function $F_\i$.
\end{thm}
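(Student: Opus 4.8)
The plan is to recognize that Theorem~\ref{Th.Basic.Estimate} is, as announced in the text, merely a repackaging of spectral estimates already available in \cite{RSLap} and \cite{RT3}, so the proof reduces to checking that the operators $\Tb_\i^*\Tb_\i$ fall \emph{literally} into the class treated there and then transcribing the corresponding bounds. Boundedness of $\Tb_\i\colon L_2(\T^\Nb)\to L_{2,\m}$ under the stated integrability hypotheses on $|F_\i|^2$ has already been secured by the mapping diagram at the end of the previous section (via the trace Theorem 11.8 and Corollary 11.8 in \cite{MazBook}), so only the singular number estimates remain. For these I would begin from the identity $n(\l,\Tb_\i)=n(\l^2,\Tb_\i^*\Tb_\i)$, valid because the singular numbers of $\Tb_\i$ are the eigenvalues of $(\Tb_\i^*\Tb_\i)^{1/2}$ acting in $L_2(\T^\Nb)$; thus it suffices to estimate the eigenvalue counting function of the nonnegative operator $\Tb_\i^*\Tb_\i$.

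By \eqref{Sb_1}, $\Tb_\i^*\Tb_\i$ is precisely the nonnegative operator in $L_2(\T^\Nb)$ generated by the quadratic form $f\mapsto\int|F_\i(X)|^2\,|(\KF_\i f)(X)|^2\,\m(dX)$, i.e.\ the Birman--Schwinger type operator associated with the classical pseudodifferential operator $\KF_\i$ of order $-\g_\i<0$ and the nonnegative singular weight $|F_\i|^2\m$. This is exactly the object whose spectral estimates are the content of Theorem 2.3 in \cite{RSLap} (for the critical regime $2\g_\i=\Nb$) and of Theorems 3.3 and 3.8 in \cite{RT3} (for the sub- and supercritical regimes), with $\s_\i=\frac{\a}{2\g_\i-\Nb+\a}$. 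Applying those theorems with $V=|F_\i|^2$ and using $n(\l,\Tb_\i)=n(\l^2,\Tb_\i^*\Tb_\i)$: in the subcritical case ($2\g_\i<\Nb$, $\m\in\Pc^\a_+$ with $\a>\Nb-2\g_\i$, $|F_\i|^2\in L_{\s_\i,\m}$), Theorem 3.3 of \cite{RT3} gives $n(\l^2,\Tb_\i^*\Tb_\i)\lesssim C_\i\||F_\i|^2\|_{L_{\s_\i,\m}}^{\s_\i}\l^{-2\s_\i}$ with $\s_\i>1$, which is \eqref{estimate j1}; in the critical case ($2\g_\i=\Nb$, $\m\in\Pc^\a$ for some $\a>0$, $|F_\i|^2$ in the Orlicz class $L^{\Psi,\m}$), Theorem 2.3 of \cite{RSLap} gives the bound $\lesssim C_\i\||F_\i|^2\|^{(\Psi,\m,av)}\l^{-2}$, which is \eqref{estimate j2}; in the supercritical case ($2\g_\i>\Nb$, $\m\in\Pc^\a_-$, $|F_\i|^2\in L_{1,\m}$), Theorem 3.8 of \cite{RT3} gives the bound involving $\||F_\i|^2\|_{L_{1,\m}}^{\s_\i}$, $\m(\Mcc)^{2-2\s_\i}$ and $\l^{-2\s_\i}$ with $\s_\i<1$, which is \eqref{estimate j3}. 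The dependence of $C_\i$ only on $\Nb$, $\g_\i$, $\a$, $\Ac(\m)$, $\Bc(\m)$ and $\KF_\i$, and its independence of $F_\i$, is inherited verbatim from the cited theorems.

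The only step requiring care, rather than pure bookkeeping, is the \emph{matching of frameworks}: one must verify that the self-adjoint weighted operators analysed in \cite{RSLap}, \cite{RT3} are admitted in the general form $\KF_\i^*(|F_\i|^2\m)\KF_\i$ with a \emph{polyhomogeneous} classical pseudodifferential operator $\KF_\i$ of order $-\g_\i$, and not only with a fixed negative power of $1-\Delta$. I expect this to be routine: writing $\KF_\i=(1-\Delta)^{-\g_\i/2}\Bb_\i$ with $\Bb_\i$ a bounded (order-zero) operator reduces the quadratic form to the model one composed with the bounded operator $\Bb_\i$, whose norm is absorbed into $C_\i$, and moreover the theorems in \cite{RSLap}, \cite{RT3} are already stated at this level of generality. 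A related minor point is to confirm that the integrability classes for $|F_\i|^2$ in the statement agree with those required there — in the critical case, $|F_\i|^2\in L^{\Psi,\m}$ with $\Psi(t)=(1+t)\log(1+t)-t$ is the same as $|F_\i|^2\in L\log L(\m)$, which is precisely the hypothesis used in those papers and already invoked in the previous section to give meaning to the functional \eqref{Functional}. Once these identifications are settled, nothing further is needed.
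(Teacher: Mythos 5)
Your proposal is correct and follows essentially the same route as the paper: the paper's own proof consists precisely of observing $n(\l,\Tb_\i)=n(\l^2,\Tb_\i^*\Tb_\i)$, identifying $\Tb_\i^*\Tb_\i$ via the quadratic form \eqref{Sb_1} as the Birman--Schwinger operator with $V=|F_\i|^2$, and invoking Theorem 2.3 of \cite{RSLap} (critical case) and Theorems 3.3, 3.8 of \cite{RT3} (noncritical cases). Your extra remark on matching a general classical $\KF_\i$ to the framework of the cited theorems is a sensible cautionary check that the paper leaves implicit, but it does not change the argument.
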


\subsection{Spectral estimates for $\Tb$}

Having the weighted integral  operator $\Tb=\Tb[\m,\KF,F_1,F_2]$ with polyhomogeneous kernel of order $-\theta>-N$ or, what is equivalent, with the pseudodifferential operator $\KF$ of order $-l=-N+\theta<0$, and given weight functions $F_1,F_2$ and measure $\m$, we are free to choose a factorization of the pseudodifferential operator  $\KF=\KF_1\KF_2.$ As $\KF_1$ we can take $\KF_1=(1-\Delta)^{-\frac{\g_1}2},$ therefore, $\KF_2=(1-\Delta)^{\frac{\g_1}2}\KF$; here $\Delta$ is the Laplacian on the torus $\T^N.$ This is equivalent to  factorizing  the integral operator $\Kb$ as $\Kb=\Kb_1\Kb_2$ with orders $\theta_1=-\Nb+\g_1,$ $\theta_2=-\Nb+\g_2.$
 Once this factorization, namely, the choice of the orders $\g_1$ and $\g_2,$ $\g_1+\g_2=\g$, is made, this determines the required properties of $\m,$ $F_1,$ $F_2,$ to be described later on. We arrive at the factorization of our operator $\Tb=\Tb[\m,\KF,F_1,F_1]:$
$\Tb=\Tb_1\Tb_2^*,$ $\Tb_\i=\Tb[\m,\KF_\i,F_\i], \, \i=1,2.$

By the  inequality \eqref{KyFan} for the singular numbers of the product of operators,
estimates \eqref{estimate j1}, \eqref{estimate j2}, \eqref{estimate j3} imply the   estimate for the singular numbers of the operator $\Tb.$
To obtain it, we set in \eqref{KyFan},
 \begin{equation*}
 \l_1=\ab\l^{\frac{\s_2}{\s_1+\s_2}}, \l_2=\ab^{-1}\l^{\frac{\s_1}{\s_1+\s_2}},  \l_1\l_2=\l,
\end{equation*}
with
   \begin{equation*}
    a=\left(\Cb_{*}(\m,\KF_1,F_1) \Cb_{*}(\m,\KF_2,F_2)^{-1} \right)^{\frac{1}{\s_1+\s_2}}.
   \end{equation*}

Then the application of the Ky Fan inequality results in
\begin{equation}\label{Weyl2}
    n(\l,\Tb)\lesssim \Cb(\m,\KF_1,\KF_2, F_1, F_2)\l^{-2\s},
\end{equation}
where
\begin{equation}\label{Weyl3}
   \Cb(\m,\KF_1,\KF_2, F_1, F_2)= \Cb_{
   \divideontimes}(\m,\KF_1,F_1)^{\frac{\s_2}{\s_1+\s_2}}\Cb_{\divideontimes}(\m, \KF_2,F_2)^{\frac{\s_1}{\s_1+\s_2}}
\end{equation}
and $\divideontimes$  stands for the proper subscript in \eqref{estimate j1}-\eqref{estimate j3}.

From the expression for $\s_1,\s_2$ we see that the exponent $-2\s={-2\frac{\s_1\s_2}{\s_1+\s_2}}$ in \eqref{Weyl2} equals, in fact $-\frac{2\a}{l+2\a},$ which means that the order in the singular numbers estimate does not depend on the way how the factorization of $\KF$ is chosen. This choice, however, determines the conditions imposed on the measure $\m$ and the weight functions $F_1,F_2.$

In the Theorem to follow, we collect possible combinations of the orders in the factorization and describe the corresponding conditions for $\m$ and $F_\i,$ $\i=1,2.$
\begin{thm}\label{Thm.Est.fact}Let $\KF$ be a pseudodifferential operator of order $-l<0$ factorized as $\KF=\KF_1\KF_2,$ where $\KF_\i,$ $\i=1,2$ are pseudodifferential operators of order $-\g_\i<0,$ $\i=1,2,$ $\g_1+\g_2=l.$  Then for the operator $\Tb=\Tb[\m,\KF,F_1,F_1]$ the singular numbers estimate \eqref{Weyl2} holds with $\Cb(\m,\KF_1,\KF_2, F_1, F_2)$ given by \eqref{Weyl3},  the following conditions being imposed on $\m,F_\i$:
\begin{enumerate}
\item if $2\g_1,2\g_2<\Nb $ then  $\m\in \Pc^{\a}_+,$ $2\a>\Nb-2\g_\i,$ and $F_\i\in L_{2\s_i,\m},$ $\s_\i=\frac{\a}{2\g_i-\Nb+2\a}$;
    \item if $2\g_1<\Nb,$ $2\g_2=\Nb$ then $\m\in\Pc^{\a},$ $2\a>\Nb-2\g_1,$ and  $F_1\in L_{2\s_1,\m},$ $\s_1=\frac{\a}{2\g_1-\Nb+2\a},$ $|F_2|^2\in L^{\Psi,\m}$;

        \item if $2\g_1<\Nb,$ $2\g_2>\Nb$ then $\m\in \Pc^\a,$ $2\a>\Nb-2\g_1$ and $F_1\in  L_{2\s_1,\m},$ $F_2\in L_{2,\m}$;
        \item if $2\g_1=\Nb$, $2\g_2=\Nb$ then $\m\in\Pc^\a,$ $\a>0,$ and $|F_\i|^2\in L^{\Psi,\m},$ $\i=1,2$;
       \item if $2\g_1=\Nb,$ $2\g_2>\Nb$ then $\m\in \Pc^\a,$ $\a>0$ and $|F_1|^2\in  L^{\Psi,\m},$ $F_2\in L_{2,\m}$;
        \item if $2\g_1,2\g_2>\Nb$ then $\m\in \Pc^{\a}, \a>0$ and $F_\i\in L_{2,\m},$ $\i=1,2.$
\end{enumerate}
\end{thm}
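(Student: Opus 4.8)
The plan is to treat the statement as a bookkeeping corollary of Theorem~\ref{Th.Basic.Estimate} together with the Ky~Fan inequality~\eqref{KyFan}, carrying out verbatim the reduction sketched in the paragraphs preceding the statement. I would fix the factorization $\KF=\KF_1\KF_2$ with $\KF_1=(1-\Delta)^{-\g_1/2}$ and $\KF_2=(1-\Delta)^{\g_1/2}\KF$, so that $\KF_\i$ has order $-\g_\i$, $\g_1+\g_2=l$, and $\Tb=\Tb_1\Tb_2^*$ with $\Tb_\i=\Tb[\m,\KF_\i,F_\i]$; this is the factorization already set up above. The whole content of the theorem is then the verification that, for each of the cases (1)--(6), the hypotheses imposed on $\m$ and on $F_1,F_2$ are exactly those under which \emph{both} factors satisfy the assumptions of Theorem~\ref{Th.Basic.Estimate}.

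First I would classify the two factors separately according to the position of $2\g_\i$ relative to $\Nb$: $2\g_\i<\Nb$ is the subcritical regime (where $\s_\i>1$), $2\g_\i=\Nb$ the critical one ($\s_\i=1$), and $2\g_\i>\Nb$ the supercritical one ($\s_\i<1$). For a subcritical factor, Theorem~\ref{Th.Basic.Estimate} requires of $\m$ only the upper bound~\eqref{a+} with exponent $\a$ large enough that the corresponding $\s_\i$ is finite and positive; for a critical factor it requires the two-sided bound~\eqref{a}; for a supercritical factor only the lower bound~\eqref{a-}. Consequently, in case~(1), where both factors are subcritical, the one-sided hypothesis $\m\in\Pc^\a_+$ suffices, whereas as soon as a factor is critical or supercritical (cases (2)--(6)) one imposes the two-sided $\m\in\Pc^\a$, and since $\Pc^\a\subset\Pc^\a_+\cap\Pc^\a_-$ this single assumption simultaneously serves the remaining subcritical factor. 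The individual conditions on $F_\i$ recorded in (1)--(6) --- namely $F_\i\in L_{2\s_\i,\m}$ in a subcritical slot, $|F_\i|^2$ in the Orlicz class $L^{\Psi,\m}$ in a critical slot, $F_\i\in L_{2,\m}$ in a supercritical slot --- are precisely those demanded of $V=|F_\i|^2$ by the corresponding line of Theorem~\ref{Th.Basic.Estimate}. Hence both hypotheses of that theorem hold for $\Tb_1$ and for $\Tb_2$; in particular each $\Tb_\i$ is bounded, so $\Tb=\Tb_1\Tb_2^*$ is a bounded operator in $L_{2,\m}$.

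It then remains to assemble the two per-factor bounds. Theorem~\ref{Th.Basic.Estimate} gives $n(\l,\Tb_\i)\lesssim\Cb_{\divideontimes}(\m,\KF_\i,F_\i)\,\l^{-2\s_\i}$, $\i=1,2$, with $\divideontimes$ the subscript appropriate to the regime of that factor (one of $\Cb_{sub},\Cb_{crit},\Cb_{sup}$ from \eqref{estimate j1}--\eqref{estimate j3}), the constant being independent of $F_\i$. Substituting $\l_1=\ab\,\l^{\s_2/(\s_1+\s_2)}$, $\l_2=\ab^{-1}\l^{\s_1/(\s_1+\s_2)}$, $\l_1\l_2=\l$, with $\ab$ chosen as in the displays preceding the statement so as to balance the two contributions, into the Ky~Fan inequality $n(\l,\Tb)=n(\l_1\l_2,\Tb_1\Tb_2^*)\le n(\l_1,\Tb_1)+n(\l_2,\Tb_2)$ produces $n(\l,\Tb)\lesssim\Cb(\m,\KF_1,\KF_2,F_1,F_2)\,\l^{-2\s}$ with $\s=\s_1\s_2/(\s_1+\s_2)$ and $\Cb$ given by~\eqref{Weyl3}; this is~\eqref{Weyl2}. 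The identity expressing $\s$ through $\Nb$, $l$ and $\a$ alone shows in addition that the order $-2\s$ is insensitive to the particular split $\g_1+\g_2=l$.

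I would not anticipate an analytic obstacle: all the hard analysis --- the trace and embedding theorems, the variational piecewise-polynomial machinery --- is already encapsulated in Theorem~\ref{Th.Basic.Estimate} (which rests on \cite{MazBook}, \cite{RSLap}, \cite{RT3}), and the Ky~Fan inequality is elementary. The only real work is the case table of the second paragraph: confirming, one pattern at a time, that a single density condition on $\m$ together with the two separate integrability conditions on $F_1,F_2$ do deliver the correct case of Theorem~\ref{Th.Basic.Estimate} for each factor, and that the stated exponent constraints ($2\a>\Nb-2\g_\i$ in a subcritical slot, $\a>0$ otherwise) are exactly what keeps every $\s_\i$, hence $\s$, in $(0,\infty)$. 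The one point demanding care is that a critical or supercritical factor forces the stronger, two-sided, measure condition~\eqref{a} rather than the one-sided~\eqref{a+} or~\eqref{a-}.
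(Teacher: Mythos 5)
Your proposal is correct and follows essentially the same route the paper itself takes: fix the factorization $\KF_1=(1-\Delta)^{-\g_1/2}$, $\KF_2=(1-\Delta)^{\g_1/2}\KF$, invoke Theorem~\ref{Th.Basic.Estimate} for each factor $\Tb_\i$ under the case-appropriate hypotheses on $\m$ and $F_\i$, and then combine the two per-factor bounds via the Ky~Fan inequality~\eqref{KyFan} with the balancing choice of $\l_1,\l_2$; the paper presents exactly this reduction in the paragraphs preceding the theorem and treats the case table as a straightforward tabulation. The only minor caveat is that your rationale for requiring the two-sided condition $\Pc^\a$ in every case involving a non-subcritical factor is stated a bit loosely (by the classification of Section~2.2 a supercritical factor by itself only needs $\Pc^\a_-$, so in case (6) the two-sided condition is what the theorem \emph{imposes}, not strictly what the per-factor hypotheses force), but this does not affect the validity of the argument since you are matching the theorem's own hypotheses.
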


\subsection{Estimates for lower order operators}
In our analysis, further on, we will need  singular numbers estimates for the case when the pseudodifferential operator $\KF$ is replaced by an operator of a lower order.

Let now, for the weights $F_1,F_2$ satisfying the conditions of Theorem \ref{Th.Basic.Estimate} with a factorization as in Theorem \ref{Thm.Est.fact} for certain fixed measure $\m$ and the order $-l<0,$ we replace the order $-l$ pseudodifferential operator $\KF$ by another one, $\KF',$ of order $-l'<-l$. It is natural to expect that the singular numbers of the operator $\Tb'=\Tb[\m,\KF',F_1,F_2]$ decay faster than the ones  of $\Tb=\Tb[\m,\KF,F_1,F_2]$.  This property is not quite trivial since it may happen that while $\Tb$ (or some of its factors) belongs to the subcritical case, the operator $\Tb'$ may get into the critical or supercritical case, so the conditions imposed on the measure $\m$ and the weights $F_\i$ might change. The following corollary states that, nevertheless,  certain, probably, not sharp, spectral estimates hold without changing the conditions for the measure $\m.$
\begin{cor}\label{Cor.lower.order} Let the conditions of Theorem \ref{Th.Basic.Estimate}, \ref{Thm.Est.fact} be satisfied for a certain factorization of $\KF$ and certain $\m, F_1, F_2, \KF$. Let $\KF'$ be an order $-l'<-l$ pseudodifferential operator. Then
\begin{equation}\label{Lower}
    n(\l,\Tb')=o(\l^{-2\s}), \, \l\to 0, \, \Tb'=\Tb[\m,\KF',F_1,F_2].
\end{equation}
\end{cor}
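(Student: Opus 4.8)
\textbf{Proof plan for Corollary \ref{Cor.lower.order}.}

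The plan is to reduce the claim to the estimates already established in Theorem \ref{Th.Basic.Estimate} and Theorem \ref{Thm.Est.fact} by exploiting the extra smoothing in $\KF'$ relative to $\KF$. First I would write $\KF'=\KF\Rc$, where $\Rc=\KF^{-1}\KF'$ is a pseudodifferential operator of order $-(l'-l)<0$ (on the torus $\KF$, being elliptic of order $-l$, is invertible modulo smoothing, and one may absorb a smoothing remainder separately since it contributes singular numbers decaying faster than any power). Then $\Tb'=\Tb[\m,\KF',F_1,F_2]=\Tb_1(\Rc\Tb_2)^*$ in the notation of Section 2, i.e. $\Tb'=\Tb_1\Tb_2'^*$ with $\Tb_2'=(F_2^*\tb_\Mcc\KF_2\Rc^*)$. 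Since $\Rc^*$ is bounded on $L_2(\T^\Nb)$, the singular numbers of $\Tb_2'$ do not exceed (a constant times) those of $\Tb_2$, so from \eqref{estimate j1}--\eqref{estimate j3} we already get $n(\l,\Tb')\lesssim \const\,\l^{-2\s}$ via \eqref{KyFan} with the same exponent; the point of the corollary is the improvement to $o(\l^{-2\s})$.

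To gain the little-$o$, I would interpolate between two estimates: the crude one just obtained and a better one available on a subspace. Concretely, fix a large parameter and split $\Rc^*=\Rc^*_R+\Rc^*_{(R)}$, where $\Rc^*_R$ is the part of $\Rc^*$ with Fourier support in $\{|\x|\le R\}$ (a finite-rank operator of rank $O(R^\Nb)$) and $\Rc^*_{(R)}$ is the high-frequency tail. On the tail, $\KF_2\Rc^*_{(R)}$ maps $L_2(\T^\Nb)$ into $H^{\g_2}(\T^\Nb)$ with operator norm $O(R^{-(l'-l)})$ because the extra order $-(l'-l)$ of $\Rc$ is now not "used up": one keeps $\KF_2$ of order $-\g_2$ but improves the gain in Sobolev scale by the factor $R^{-(l'-l)}$ in operator norm (alternatively, view $\KF_2\Rc^*_{(R)}$ as $\KF_2$ composed with a multiplier of norm $\le R^{-(l'-l)}$). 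Hence the corresponding factor $\Tb_2'^{(R)}=F_2^*\tb_\Mcc\KF_2\Rc^*_{(R)}$ satisfies $n(\l,\Tb_2'^{(R)})\lesssim C R^{-2\s_2(l'-l)}\l^{-2\s_2}$ by rescaling the bound of Theorem \ref{Th.Basic.Estimate} (the constant $C_2$ there is proportional to a power of the norm of the operator entering the quadratic form \eqref{Sb_1}, so a factor $R^{-(l'-l)}$ in that norm produces the factor $R^{-2\s_2(l'-l)}$ in the eigenvalue bound). Combining the finite-rank contribution of rank $O(R^\Nb)$ with the tail via \eqref{KyFan} and then optimizing $R=R(\l)\to\infty$ as $\l\to 0$ slowly enough that $R(\l)^\Nb=o(\l^{-2\s})$ while $R(\l)^{-2\s_2(l'-l)}\to 0$, one obtains $n(\l,\Tb')=o(\l^{-2\s})$.

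The main obstacle I anticipate is bookkeeping the conditions on $\m$ and $F_\i$: as the corollary warns, passing from $\KF$ to $\KF'$ may move a factor from the subcritical into the critical or supercritical regime, changing which integral norm of $F_\i$ is the "right" one. The remedy — and the reason the corollary only claims a non-sharp $o$-estimate — is that one does \emph{not} refactorize $\KF'$ optimally; one keeps the factorization $\KF=\KF_1\KF_2$ (hence $\KF_2$ and $\g_2$) fixed, inherited from the hypotheses of Theorem \ref{Thm.Est.fact}, and attaches the whole extra smoothing $\Rc$ to the already-bounded operator $\Tb_2$ on the $L_2(\T^\Nb)$ side, where no measure-theoretic condition is involved. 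This way the conditions on $\m$ and $F_\i$ never change, only the decay rate improves; the cost is that the resulting exponent stays $-2\s$ with a little-$o$ rather than the sharp $-2\s'$ with $\s'<\s$ one would get from an optimal refactorization. One should also double-check that the smoothing remainder in $\KF^{-1}\KF'=\Rc$ is harmless: being an operator of order $-\infty$, composed with the bounded $\Tb_2$ it yields singular numbers of $O(\l^{-\e})$ for every $\e>0$, hence is absorbed into the $o(\l^{-2\s})$.
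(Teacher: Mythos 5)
Your core idea is the paper's idea: keep the orders $\g_1,\g_2$ fixed and push all of the extra smoothing onto a factor acting on $L_2(\T^\Nb)$, where no measure-theoretic condition arises, so the hypotheses on $\m$ and $F_\i$ never change. Where your execution diverges, however, it introduces a gap and an unnecessary detour.

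The gap is the factorization $\KF'=\KF\Rc$ with $\Rc=\KF^{-1}\KF'$. This requires $\KF$ (or, in the variant where you commute $\Rc^*$ past $\KF_2^*$ to write $\Tb_2'=\Tb_2\,M$, requires $\KF_2$) to be elliptic, so that a parametrix exists. Neither Theorem \ref{Th.Basic.Estimate} nor Theorem \ref{Thm.Est.fact} assumes ellipticity of $\KF$: the hypotheses only prescribe a factorization $\KF=\KF_1\KF_2$ into negative-order factors, and in the paper's canonical choice $\KF_2=(1-\Delta)^{\g_1/2}\KF$ inherits whatever degeneracy $\KF$ has. The paper sidesteps this entirely by introducing a \emph{fresh} three-factor decomposition built from Bessel potentials,
$\KF'=\KF_1'\,\KF_3'\,\KF_2'$ with $\KF_1'=(1-\Delta)^{-\g_1/2}$, $\KF_2'=(1-\Delta)^{-\g_2/2}$, and $\KF_3'=(1-\Delta)^{\g_1/2}\KF'(1-\Delta)^{\g_2/2}$; this is always a legitimate factorization, requires no parametrix, and $\KF_1',\KF_2'$ have exactly the same orders $-\g_1,-\g_2$ so Theorem \ref{Th.Basic.Estimate} applies verbatim to the outer factors.

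The detour is the frequency cutoff and interpolation. Once the extra order $-(l'-l)$ has been isolated in a middle operator acting on $L_2(\T^\Nb)$, that operator is a negative-order pseudodifferential operator on the compact torus, hence compact with \emph{power}-decaying singular numbers, $n(\l,\KF_3')=O(\l^{-\Nb/(l'-l)})$. Feeding that into the three-factor Ky Fan inequality \eqref{KyFan} together with the $\l^{-2\s_\i}$-bounds for $\Tb_1,\Tb_2$ immediately produces a strictly faster power, and therefore $o(\l^{-2\s})$, with no need to split $\Rc^*$ into low- and high-frequency pieces, rescale the constant of Theorem \ref{Th.Basic.Estimate} by the operator norm of a modified factor (a dependence which is plausible but is not explicitly stated in the theorem as you use it), or optimize $R(\l)$. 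Your interpolation scheme would, with care, deliver the same conclusion; it is just a longer route to a fact already encoded in the singular-number decay of the middle operator. If you replace $\KF^{-1}\KF'$ by the parametrix-free middle factor $(1-\Delta)^{\g_1/2}\KF'(1-\Delta)^{\g_2/2}$ and invoke the three-factor Ky Fan inequality directly, your proof collapses to the paper's.
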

\begin{proof}Let $\KF=\KF_1\KF_2$  be the factorization present in Theorem \ref{Th.Basic.Estimate}, with operators $\KF_1,\KF_2$ having order $-\g_1,-\g_2, \,\g_1+\g_2=l$, so that $\m,F_1,F_2$ satisfy the conditions of this theorem. We factorize $\KF'$ in the following way
\begin{equation}\label{LowOrder}
    \KF'=\KF_1'\KF_3'\KF_2'.
\end{equation}
Here we take $\KF_1'=(1-\Delta)^{-\g_1/2}, \KF_2'=(1-\Delta)^{-\g_2/2}$ and $\KF'_3=(1-\Delta)^{\g_1/2}\KF'(1-\Delta)^{\g_2/2}.$
The factorization \eqref{LowOrder} leads to the factorization of the operator $\Tb'$:
   \begin{equation*}
    \Tb'=\Tb_1 \KF_3' \Tb_2^*, \, \Tb_1=F_1\KF_1', \Tb_2=F_2^*\KF_2'.
   \end{equation*}

Singular numbers estimates for $\Tb_1$ and $\Tb_2$ are the same as in Theorem \ref{Thm.Est.fact}, while $\KF_3'$, is a pseudodifferential operator on $\T^\Nb$ of negative order $(l-l')<0,$ having decaying singular numbers, $n(\l, \KF_3')=O(\l^{(l-l')/\Nb})$.
By the Ky Fan inequality, it follows that \eqref{Lower} holds.
\end{proof}
\begin{remark}
 One might have  considered a more general setting, namely, an operator of the form \eqref{Operator TKFint} but acting from $L_{2,\m}$ to $L_{2,\m'},$ where $\m'$ is a different (possibly, singular, measure.) Here, extensive complications arise since the spectral estimate should now depend not only on the properties of the measures $\m,\m'$ taken separately, but also on their relative position in $\R^\Nb.$ We leave this line of studies for a time being.
\end{remark}
\section{Schur multipliers and spectral estimates}
In papers by M.Sh. Birman and M.Z. Solomyak, a deep study was performed, concerning  properties of transformations in Schatten classes of integral operators, generated by Schur multipliers. Namely, having a function $\F(X,Y)$, the Schur multiplier transformation $\Mb[\F]$ associates with an integral operator with kernel $K(X,Y)$ another operator, the integral operator with kernel $\F(X,Y)K(X,Y)$. Explicit analytic conditions for the function  $\F$ granting that $\Mb[\F]$  transforms any integral operator in a certain Schatten class to an operator in the same class (or in some other prescribed Schatten class) were found. In particular, in \cite{KostSol}, the results on multipliers in weak Schatten classes were used for obtaining eigenvalue estimates for operators with kernel of the form $\F(X,Y)K(X,Y, X-Y),$ where $K$ is an integral kernel with
weak singularity as $X-Y\to 0.$ Even more general results have been obtained in \cite{BS5} (Sect. 8,10)
and in \cite{Kost.MSb}. In these papers, operators were considered in $L_2$ spaces with measure. For kernels with a relatively strong singularity (of order $-\theta \le -\Nb/2$) at the diagonal, these measures were supposed to be absolutely continuous with respect to the Lebesgue measure, with prescribed properties of the densities, while for a weaker singularity the results on singular numbers estimates hold for any finite measures, thus admitting the singular ones.  Simultaneously, the weaker is the polarity of  the kernels of integral  operators  at the diagonal, the more smoothness is required for the multiplier $\F(X,Y)$ to generate a transformation in the proper Schatten class. The conditions imposed of $\F(X,Y)$ in these papers are expressed in rather complicated terms. We give here a simplified version, sufficient for our needs.
\begin{proposition}\label{Prop.Multipl.Est} Let $\F(X,Y)$ be a function on a  $\Qb\times\Qb$, where $\Qb$ is a cube in $\R^N$. Then there exists $\mb(\Nb,q)$ such that for $m>\mb(\Nb,q)$,
any function $\F(X,Y)$ possessing continuous partial derivatives of order up to $m$ in all variables is a Schur multiplier in the space  of integral operators $\Tb: L_2(\Qb,\m_2)\to L_2(\Qb,\m_1) $ in the weak Schatten class $\Sigma_q$ (of operators with singular numbers satisfying $n(\l,T)=O(\l^{-\frac{1}{q}})$) for  \emph{any } finite measures $\m_1,\m_2.$
\end{proposition}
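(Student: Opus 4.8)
The plan is to expand $\F$ into an absolutely convergent series of elementary tensors built from \emph{unimodular} functions, so that the Schur transformation $\Mb[\F]$ becomes a series of unitary conjugations of the given operator, each of which leaves the $\Sigma_q$-quasinorm unchanged; the smoothness of $\F$ will enter only through the decay of its Fourier coefficients. This is a streamlined version of the multiplier theorems of \cite{KostSol}, \cite{BS5}, \cite{Kost.MSb}, adapted to the present needs.

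First I would pass to a Fourier expansion of $\F$. Enclosing $\Qb$ in the interior of a larger cube and applying a standard $C^m$ extension followed by a smooth cut-off, one extends $\F$ to a compactly supported function of class $C^m$ on $\R^\Nb\times\R^\Nb$ without introducing derivatives of order exceeding $m$; periodizing with a sufficiently large period $L$ produces Fourier coefficients $\widehat\F(\xi,\eta)$, $(\xi,\eta)\in\Z^\Nb\times\Z^\Nb$, satisfying $|\widehat\F(\xi,\eta)|\le C_m(\F)\,(1+|\xi|+|\eta|)^{-m}$ by the usual integration-by-parts bound. On $\Qb\times\Qb$ one then has $\F(X,Y)=\sum_{\xi,\eta}\widehat\F(\xi,\eta)\,e_\xi(X)\,e_\eta(Y)$ with $e_\xi(X)=e^{2\pi i L^{-1}\xi\cdot X}$, which is unimodular.

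Next I would rewrite the Schur transformation as an operator series. For an integral operator $\Tb\colon L_2(\Qb,\m_2)\to L_2(\Qb,\m_1)$ with kernel $K$, the operator $M_{e_\xi}\Tb M_{e_\eta}$, where $M_g$ denotes multiplication by $g$, has kernel $e_\xi(X)e_\eta(Y)K(X,Y)$, so that, formally,
\[
\Mb[\F]\Tb=\sum_{\xi,\eta}\widehat\F(\xi,\eta)\,M_{e_\xi}\Tb M_{e_\eta}.
\]
Since each $e_\xi$ is unimodular, $M_{e_\xi}$ is unitary on $L_2(\Qb,\m_1)$ and $M_{e_\eta}$ is unitary on $L_2(\Qb,\m_2)$; as $\Sigma_q$ is a unitarily invariant operator ideal, $\|M_{e_\xi}\Tb M_{e_\eta}\|_{\Sigma_q}=\|\Tb\|_{\Sigma_q}$. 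Moreover $\Sigma_q$ is a quasi-Banach ideal, so, after replacing its quasinorm by an equivalent one, there is $r=r(q)\in(0,1]$ for which $\|\sum_k A_k\|_{\Sigma_q}^{r}\le\sum_k\|A_k\|_{\Sigma_q}^{r}$. Applying this to the tails of the series and using the Fourier decay gives
\[
\Big\|\sum_{|\xi|+|\eta|>R}\widehat\F(\xi,\eta)\,M_{e_\xi}\Tb M_{e_\eta}\Big\|_{\Sigma_q}^{r}\le\|\Tb\|_{\Sigma_q}^{r}\sum_{|\xi|+|\eta|>R}|\widehat\F(\xi,\eta)|^{r},
\]
and the right-hand side is the tail of a convergent series as soon as $mr>2\Nb$. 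Hence, taking $\mb(\Nb,q):=\lfloor 2\Nb/r(q)\rfloor+1$, the partial sums of $\Mb[\F]\Tb$ form a Cauchy sequence in $\Sigma_q$, hence converge there, and, the inclusion of $\Sigma_q$ into the compact operators being continuous, they converge in operator norm as well. To finish I would identify the limit: testing it on bounded functions and using dominated convergence together with $\|\F\|_{\infty}<\infty$ shows that its kernel is $\F(X,Y)K(X,Y)$, so $\Mb[\F]$ maps $\Sigma_q$ into $\Sigma_q$ boundedly for \emph{any} finite measures $\m_1,\m_2$, as claimed.

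The main obstacle is the interplay between the infinite summation and the fact that the weak class $\Sigma_q$ is only quasi-normed: it is precisely the $r$-subadditivity exponent $r(q)$ that makes the argument lose sharpness, producing the crude threshold $m>2\Nb/r(q)$ instead of the optimal, $q$-dependent smoothness of \cite{BS5}; this is harmless here, since only the existence of \emph{some} such $\mb(\Nb,q)$ is required. The remaining ingredients --- Fourier decay of $C^m$ functions, unitarity of the modulation operators $M_{e_\xi}$, and the kernel identification for the norm limit --- are routine.
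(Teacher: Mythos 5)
The paper itself gives no proof of Proposition~\ref{Prop.Multipl.Est}: the text presents it as a ``simplified version, sufficient for our needs'' of the multiplier theorems in \cite{KostSol}, \cite{BS5} (Sect.~8,~10), and \cite{Kost.MSb}, and defers to those sources. Your argument supplies a self-contained proof, and it is correct; moreover it is essentially the same mechanism that underlies the cited Birman--Solomyak results, namely expanding $\F$ in an absolutely convergent series of separated unimodular exponentials $e_\xi(X)e_\eta(Y)$, so that $\Mb[\F]$ acts as $\sum\widehat\F(\xi,\eta)M_{e_\xi}\Tb M_{e_\eta}$ with each term a two-sided unitary conjugation (unitary for \emph{any} measures $\m_1,\m_2$, since $|e_\xi|\equiv1$) that preserves the $\Sigma_q$-quasinorm exactly, and then summing via Aoki--Rolewicz $r$-subadditivity. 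You deliberately trade sharpness for simplicity, which the existential phrasing ``there exists $\mb(\Nb,q)$'' permits, so this is not a defect. The only step you pass over quickly is the identification of the $\Sigma_q$-limit $S_\infty$ of the partial sums $S_R$ with the integral operator of kernel $\F K$; for completeness one should note that the partial Fourier sums $\F_R$ satisfy $\sup_R\|\F_R\|_\infty<\infty$ and $\F_R\to\F$ uniformly (by absolute convergence of the Fourier series), so that for each $u\in L_2(\m_2)$ one has $S_Ru(X)\to\int\F(X,Y)K(X,Y)u(Y)\,\m_2(dY)$ pointwise a.e.\ by dominated convergence, while $S_Ru\to S_\infty u$ in $L_2(\m_1)$ gives an a.e.\ convergent subsequence, forcing $S_\infty=\Mb[\F]\Tb$. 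That is a routine addition, not a gap; the proof stands.
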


We will also use the fact that, for any finite measures $\m_1,\m_2,$ operators with smooth kernel have arbitrarily rapid eigenvalues decay rate, as soon as the smoothness is sufficiently high. This statement is contained as a particular case of Proposition 2.1 in \cite{BS5}, where more complicated but less restrictive condition are imposed.
\begin{proposition}\label{Prop.smooth}Let $\m_1,\m_2$ be finite Borel measures on a cube $\Qb\subset \R^\Nb$. Suppose that $U(X,Y)\in C^m(\Qb\times \Qb)$ and $m>2\Nb$. Then the operator $\Ub:L_{2,\m_2}\to L_{2,\m_1},$
    \begin{equation*}
    (\Ub u)(X)=\int U(X,Y)u(Y)\m_2(dY)
    \end{equation*}
has singular numbers satisfying
    \begin{equation*}
    n(\l,\Ub)=O(\l^{-{1/q}}), \, q=\frac{\Nb+2m}{2\Nb}.
    \end{equation*}

\end{proposition}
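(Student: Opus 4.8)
The plan is to prove the estimate by a piecewise--polynomial approximation of the kernel, in the spirit of the classical Birman--Solomyak theory: one only has to produce, cheaply, finite--rank approximations of $\Ub$ that are close in the Hilbert--Schmidt norm. Since $\m_1,\m_2$ are finite and $U$ is bounded, $\Ub$ is Hilbert--Schmidt, with $\|\Ub\|_{\mathfrak S_2}=\|U\|_{L_2(\m_1\times\m_2)}\le\|U\|_{C^0}\,(\m_1(\Qb)\m_2(\Qb))^{1/2}$; in particular $\Ub$ is compact and, by the optimality of the truncated singular value decomposition,
\begin{equation*}
 \sum_{j>n}s_j(\Ub)^2=\min\bigl\{\,\|\Ub-R\|_{\mathfrak S_2}^2:\ \operatorname{rank}R\le n\,\bigr\}\qquad(n\in\N).
\end{equation*}
So it suffices to construct, for $n$ running over a geometric sequence, an operator $R$ of rank $\lesssim n$ with $\|\Ub-R\|_{\mathfrak S_2}$ correspondingly small.

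For this I would normalize $\Qb=[0,1]^\Nb$, fix $L\in\N$, partition $\Qb$ into the $2^{L\Nb}$ subcubes $\Qb^L_i$ of side $2^{-L}$ with centres $X_i$, and, on each product cell $\Qb^L_i\times\Qb^L_j$, replace $U$ by its Taylor polynomial of degree $<m$ at $(X_i,X_j)$, obtaining a piecewise polynomial $U_L$. Because $U\in C^m(\Qb\times\Qb)$ and the cells are convex, Taylor's formula with the Lagrange remainder gives $\|U-U_L\|_{C^0(\Qb\times\Qb)}\le C(\Nb,m)\|U\|_{C^m}2^{-Lm}$, hence $\|\Ub-\Ub_L\|_{\mathfrak S_2}=\|U-U_L\|_{L_2(\m_1\times\m_2)}\le C'\,2^{-Lm}$. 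The bookkeeping point is that, although $U_L$ lives on $2^{2L\Nb}$ cells, regrouping the cell polynomials according to the $X$--monomials $(X-X_i)^\a$, $|\a|<m$, writes $U_L(X,Y)$ as a sum of $\lesssim 2^{L\Nb}$ products $\bigl(\mathbf 1_{\Qb^L_i}(X)(X-X_i)^\a\bigr)\,h_{i\a}(Y)$; hence the integral operator $\Ub_L$ has range spanned by those $\lesssim 2^{L\Nb}$ functions, i.e. $\operatorname{rank}\Ub_L\le n_L:=C(\Nb,m)\,2^{L\Nb}$.

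Feeding these two bounds into the displayed identity with $n=n_L$ gives $\sum_{j>n_L}s_j(\Ub)^2\le C'^2\,2^{-2Lm}\lesssim n_L^{-2m/\Nb}$, and then, choosing for an arbitrary $n$ the index $L$ with $n_{L-1}\le n<n_L$ and using $n_{L-1}=2^{-\Nb}n_L$, one gets $\sum_{j>n}s_j(\Ub)^2\lesssim n^{-2m/\Nb}$ for all $n$. Since $n\,s_{2n}(\Ub)^2\le\sum_{j=n+1}^{2n}s_j(\Ub)^2$, this yields $s_n(\Ub)=O\bigl(n^{-\frac{1}{2}-\frac{m}{\Nb}}\bigr)$, and therefore
\begin{equation*}
 n(\l,\Ub)=\#\{\,n:\ s_n(\Ub)>\l\,\}=O\bigl(\l^{-2\Nb/(\Nb+2m)}\bigr)=O(\l^{-1/q}),\qquad q=\frac{\Nb+2m}{2\Nb},
\end{equation*}
which is the assertion (the hypothesis $m>2\Nb$ is much more than this argument needs).

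The step I expect to be the real obstacle — or at least the one that must be done with care — is the rank count $\operatorname{rank}\Ub_L\lesssim 2^{L\Nb}$: this is where the tensor structure of the domain $\Qb\times\Qb$ is used, the piecewise polynomial on $2^{2L\Nb}$ product cells collapsing into only $\lesssim 2^{L\Nb}$ elementary tensors $g(X)h(Y)$. The other thing easy to overlook is that one must control the \emph{whole} Hilbert--Schmidt tail $\sum_{j>n}s_j(\Ub)^2$ rather than the single number $s_{n_L+1}(\Ub)\le\|\Ub-\Ub_L\|$: the latter route only produces the weaker $n(\l,\Ub)=O(\l^{-\Nb/m})$, and the extra $\tfrac12$ in the exponent of $s_n(\Ub)$ — hence the sharp value of $q$ — comes precisely from using the $\mathfrak S_2$ norm of the remainder instead of the operator norm.
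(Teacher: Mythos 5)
Your proof is correct. The paper itself does not prove Proposition~\ref{Prop.smooth} but merely cites it as a particular case of Proposition~2.1 of \cite{BS5}; your argument is a self-contained rendering of exactly the Birman--Solomyak piecewise-polynomial approximation scheme underlying that reference, including the two points you correctly flag as the ones requiring care: the rank collapse from $2^{2L\Nb}$ product cells to $\lesssim 2^{L\Nb}$ elementary tensors via regrouping in $X$-monomials, and the use of the Hilbert--Schmidt (rather than operator) norm of the remainder to gain the extra $\tfrac12$ in the exponent that yields $q=\frac{\Nb+2m}{2\Nb}$.
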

It stands to reason that these results carry over automatically to operators defined on the torus $\T^{\Nb}$ instead of a cube.
In our applications, the role of $\m_1,$ $\m_2$ will be played by $F_1\m$,  $F_2\m,$ where $\m$ is a singular measure satisfying one of conditions in \eqref{estimate j1}--\eqref{estimate j3} and $F_1,F_2$ are weight functions. Thus, using Proposition \ref{Prop.smooth}, we arrive at our result on multipliers in the set of operators with singular measures.\\
We denote by $\Ks[\Nb,\theta,\a]$ the space of integral operators satisfying one of conditions of Theorem \ref{Th.Basic.Estimate}.
\begin{thm}\label{Thm.multiplic}
Let $\Kb$ be an integral operator in $\Ks[\Nb,\theta,\a]$ with some weakly polar kernel $K(X,Y,X-Y)$ with singularity order $-\theta$ at the diagonal, with the  measure $\m$ and the weight functions $F_1,F_2$ as above. Suppose that $\F$ is a function on $\T^\Nb\times\T^\Nb$ belonging to $C^{2m}(\T^\Nb\times\T^{\Nb}),$ $m>\mb(\Nb,2\s)$. Then for the integral operator $\Hb$ with the same measure $\m$ and weight functions and with the kernel $H(X,Y)=\F(X,Y)K(X,Y,X-Y),$  the singular numbers  estimate holds
  \begin{equation*}
   n(\l,\Hb) \lesssim  C(\F)\Cb(\m,\KF_1,\KF_2, F_1, F_2) \l^{-2\s}
  \end{equation*}

with the  constant $\Cb(\m,\KF_1,\KF_2, F_1, F_2)$ is determined by the parameters in Theorem  \ref{Thm.Est.fact}, thus depending  on the factorization of the operator $\KF$, the parameter of the measure $\m$ and the proper integral norms of the weight functions $F_1,F_2$, while $C(F)$ depends on the bounds for the derivatives of $\F.$
\end{thm}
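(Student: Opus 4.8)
The plan is to recognise $\Hb$ as the image of the operator $\Tb=\Tb[\m,\Kb,F_1,F_2]$ of Theorem~\ref{Thm.Est.fact} under the Schur-multiplier transformation generated by $\F$, and to combine the spectral estimate of Theorem~\ref{Thm.Est.fact} for $\Tb$ with Proposition~\ref{Prop.Multipl.Est} for the multiplier. Since $\m$ is compactly supported, everything takes place on $\T^\Nb$; writing both $\Tb$ and $\Hb$ as integral operators in $L_{2,\m}$ with $\m$-kernels $F_1(X)K(X,Y,X-Y)F_2(Y)$ and $F_1(X)\F(X,Y)K(X,Y,X-Y)F_2(Y)$, one has $\Hb=\Mb[\F]\Tb$, the measure $\m$ and the weights $F_1,F_2$ being left untouched while $\F$ enters only through $\Mb[\F]$. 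By Theorem~\ref{Thm.Est.fact}, under the stated hypotheses $\Tb$ is bounded and $\limsup_{\l\to0}n(\l,\Tb)\l^{2\s}\le\Cb(\m,\KF_1,\KF_2,F_1,F_2)$; equivalently $\Tb$ lies in the weak Schatten class $\Sigma_q$, $q=1/(2\s)$, and $\limsup_k k^{1/(2\s)}s_k(\Tb)\le\Cb^{1/(2\s)}$.

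Next I would apply Proposition~\ref{Prop.Multipl.Est}, which (as noted after Proposition~\ref{Prop.smooth}) carries over to $\T^\Nb$ in place of a cube: since $\F\in C^{2m}(\T^\Nb\times\T^\Nb)$ with $m>\mb(\Nb,2\s)$, the transformation $\Mb[\F]$ maps $\Sigma_q$ into itself, $q=1/(2\s)$. Tracking the Birman--Solomyak construction, its norm as a bounded operator of the quasi-Banach space $\Sigma_q$ is controlled by a constant $C(\F)$ depending only on finitely many sup-norms of the derivatives of $\F$; hence $\Hb=\Mb[\F]\Tb\in\Sigma_q$ and $n(\l,\Hb)\le C(\F)\,\|\Tb\|_{\Sigma_q}\,\l^{-2\s}$ for all $\l>0$.

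To sharpen this to $n(\l,\Hb)\lesssim C(\F)\Cb\,\l^{-2\s}$ (with $\lesssim$ in the $\limsup$ sense), fix $\ve>0$ and split $\Tb=\Tb'_\ve+\Tb''_\ve$, where $\Tb'_\ve$ is the truncation of $\Tb$ to its $N_\ve$ largest singular numbers (finite rank) and, by the $\limsup$ above, $\|\Tb''_\ve\|_{\Sigma_q}\le\Cb^{1/(2\s)}+\ve$. Then $n\big(\l,\Mb[\F]\Tb''_\ve\big)\le\big(C(\F)(\Cb^{1/(2\s)}+\ve)\big)^{2\s}\l^{-2\s}$ for all $\l$, whereas $\Mb[\F]\Tb'_\ve$ lies in the separable ideal $\Sigma_q^{0}=\{T:n(\l,T)=o(\l^{-2\s})\}$: indeed, $\Mb[\F]$ carries finite-rank operators into $\Sigma_q^{0}$, because truncating the Fourier expansion $\F(X,Y)=\sum c_{mn}e_m(X)\overline{e_n(Y)}$ yields finite-rank operators converging to $\Mb[\F]\Tb'_\ve$ in the $\Sigma_q$-quasinorm (convergence guaranteed by $\sum|c_{mn}|<\infty$, valid once $2m$ is large), and a $\Sigma_q$-limit of finite-rank operators belongs to $\Sigma_q^{0}$. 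Using $n(s+t,A+B)\le n(s,A)+n(t,B)$, distributing $\l$ between the two summands, and letting $\ve\to0$, one obtains $\limsup_{\l\to0}n(\l,\Hb)\l^{2\s}\le C(\F)^{2\s}\Cb$; renaming $C(\F)^{2\s}$ as $C(\F)$ gives the assertion, with $\Cb=\Cb(\m,\KF_1,\KF_2,F_1,F_2)$ exactly as in Theorem~\ref{Thm.Est.fact}. The matrix-valued case requires only notational changes.

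The step I expect to be the main obstacle is the quantitative and ``separable'' refinement of Proposition~\ref{Prop.Multipl.Est}: one must verify that the Birman--Solomyak multiplier theorem yields not merely membership of $\Mb[\F]T$ in $\Sigma_q$ but a bound on its $\Sigma_q$-quasinorm in terms of $\|T\|_{\Sigma_q}$ and finitely many derivative sup-norms of $\F$, and that $\Mb[\F]$ preserves $\Sigma_q^{0}$ --- these being precisely what is needed so that the \emph{sharp} leading constant $\Cb$, rather than a cruder bound on $\Tb$, survives the passage through $\Mb[\F]$. The remaining ingredients --- the identification $\Hb=\Mb[\F]\Tb$, the reduction to $\T^\Nb$, and the elementary arithmetic with $\lesssim$ --- are routine.
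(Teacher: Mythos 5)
Your proof takes a genuinely different route from the paper's. You identify $\Hb=\Mb[\F]\Tb$ and invoke Proposition~\ref{Prop.Multipl.Est} as a black box (sharpened by the truncation trick $\Tb=\Tb'_\ve+\Tb''_\ve$ to preserve the exact leading constant $\Cb$). The paper, by contrast, \emph{never actually applies} Proposition~\ref{Prop.Multipl.Est}; it re-proves the required multiplier bound constructively by Taylor-expanding $\F(X,Y)$ at $Y=X$ to order $m$: the $\nb=0$ term produces a bounded factor $\F(X,X)$ that is simply absorbed into the weight $F_1$ and handled by Theorems~\ref{Th.Basic.Estimate}--\ref{Thm.Est.fact} verbatim, the intermediate terms carry an extra factor $(X-Y)^\nb$ that raises the homogeneity order and are dispatched by Corollary~\ref{Cor.lower.order} as $o(\l^{-2\s})$, and the remainder $\F_{(m)}K$ vanishes to high order at the diagonal, is therefore $C^k$ for $k$ large, and falls under Proposition~\ref{Prop.smooth}. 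What the paper's route buys is self-containedness and a direct view of which part of $\F$ controls the leading term ($\F(X,X)$ only), at the price of a somewhat lengthier case analysis; what your route buys is brevity and conceptual cleanliness, at the price of needing two refinements of Proposition~\ref{Prop.Multipl.Est} that the paper does not supply.

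Those two refinements are real obstacles and you flag them honestly, but one of them is handled a bit loosely. You need (a) a \emph{quantitative} $\Sigma_q$-quasinorm bound $\|\Mb[\F]\|_{\Sigma_q\to\Sigma_q}\le C(\F)$ controlled by derivative sup-norms, and (b) that $\Mb[\F]$ maps finite-rank operators into $\Sigma_q^0$. Your argument for (b) --- absolute summability $\sum|c_{kl}|<\infty$ of the Fourier coefficients of $\F$, hence trace-norm convergence of the truncated sums --- only proves $\Sigma_q$-convergence when $q\le1$, since $S_1\hookrightarrow\Sigma_q$ fails for $q>1$. But $q=1/(2\s)$ can exceed $1$ (this happens precisely when both factors $\KF_1,\KF_2$ fall in the supercritical regime, so $\s<1/2$). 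The gap is repairable: since $|e_k(X)|\equiv1$ on $\T^\Nb$, multiplication by $e_k$ and $\bar e_l$ is unitary on $L_{2,\m}$, so $\|c_{kl}\,e_k\,\Tb'_\ve\,\bar e_l\|_{\Sigma_q}=|c_{kl}|\,\|\Tb'_\ve\|_{\Sigma_q}$; applying the $p$-triangle inequality of $\Sigma_q$ with $p=\min(q,1)$ then reduces matters to $\sum_{k,l}|c_{kl}|^p<\infty$, which does hold for $\F\in C^{2m}$ with $m$ large. You should replace the trace-norm step by this argument so that the case $q>1$ is covered. With that fix and a citation to a quantitative version of the Birman--Solomyak multiplier theorem for (a), your route is sound.
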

\begin{proof} For every fixed $X\in\Mcc,$ we consider the starting fragment of the Taylor expansion of the function $\F(X,Y)$ at the  point $(X,X)$ in powers of $Y-X$:
\begin{gather}\label{Taylor}
    \F(X,Y)=\sum_{|\nb|<m}(\nb!)^{-1}\partial_Y^\nb\F(X,X)(Y-X)^{\nb} +\F_{(m)}(X,Y)\\\nonumber\equiv\sum_{|\nb|<m} \F_{\nb}(X)(Y-X)^{\nb}+\F_{(m)}(X,Y).
\end{gather}
We consider first the leading term in $\Hb$ corresponding to the first term in \eqref{Taylor}, $\nb=0$,
 \begin{equation*}
 H_{0}(X,Y,X-Y)=\F(X,X)   K(X,Y,X-Y)\equiv \F_0(X)   K(X,Y,X-Y).
 \end{equation*}

For given $F_1,F_2,$ the integral operator $\Hb_0$ with kernel $H_{0}(X,Y,X-Y)$ acts as
\begin{equation*}
   (\Hb_0 u) (X) = \int F_1(X)\F(X,X)  K(X,Y,X-Y) u(Y)F_2(Y)\m(dY).
\end{equation*}

This operator is of the same kind  as our initial one, however with the weight function $F_1$ replaced by the weight function $F_1(X)\F(X,X).$   Note also that $\F(X,X)$ is a bounded function, and therefore $F_1(X)\F(X,X)$ belongs to the same space of weights as $F_1,$ the space required by Theorems \ref{Th.Basic.Estimate}, \ref{Thm.Est.fact}. Therefore, incorporating $\F(X,X)$ into the weight function, we obtain  for the operator $\Hb_0$ the same singular numbers estimate.

Next, we consider the operator  $\Hb_{\nb}$ corresponding to some term in \eqref{Taylor} with $|\nb|>0.$ 
 The kernel $H_{\nb}(X,Y,X-Y)=K(X,Y,X-Y)(X-Y)^\nb$ is smooth for $X\ne Y$ and has homogeneity of order $\theta+|\nb|$ in $X-Y$, so it is  larger  than the homogeneity order which $K$ has. Since the function $\F_{\nb}(X)$ is bounded, the weight function $F_1(X)\F_{\nb}(X)$ belongs to the same  space of weights as $F_1.$ By Corollary \ref{Cor.lower.order}, this gives us the spectral estimate
    \begin{equation*}
    n(\l,\Hb_{\nb})=o(\l^{-\frac{2\a}{2l-N+2\a}}).
    \end{equation*}

Finally,  we consider the remainder term for $\Hb,$ namely, $H_{(m)}(X,Y,X-Y)=\F_{(m)}(X,Y)  K(X,Y,X-Y),$ in \eqref{Taylor}. By our assumptions, the function $\F_{(m)}$ is $C^\mb$ smooth outside the diagonal $X=Y$, therefore, the same is valid for the product $\F_{(m)}K$. As for the diagonal, $X=Y,$ the function $\F_{(m)}(X,Y)$ has zero of order $m$ as $X\to Y$; therefore, the product $H_{(m)}=\F_{(m)}K$ has zero of order not less than $\Nb+|\theta|$, and
Proposition \ref{Prop.smooth} gives the required singular number estimate, as soon as $m$ is taken large enough.
\end{proof}

In the matrix case, we can consider both left and right multipliers:
\begin{equation*}
\Mb[\F_\ell,\F_r]K=\F_\ell(X,Y)K(X,Y,X-Y)\F_r(X,Y),
\end{equation*}

with matrix functions $\F_\ell(X,Y)$, $\F_r(X,Y)$ of proper size. The reasoning above carries over to this case automatically.

Note here that we used the arbitrariness in the choice of the weights in the proof above: we could incorporate a 'not sufficiently smooth' factor in the kernel into the weight function. We will return to this pattern further on, when we consider the multipliers in the asymptotic formulas.

\section{Eigenvalue asymptotics}\label{asymptotics} Following the basic strategy of  using perturbation  approach, the asymptotic formulas for eigenvalues can be established. We consider the self-adjoint case, so the entries in the operator $\Kb$ in \eqref{Tscalar} satisfy the following conditions:
  \begin{equation*}
    K(X,Y,X-Y)=K(Y,X,Y-X)^*; \, F_1(X)=F_2(X)^*;
  \end{equation*}

here the symbol ${}^*$ denotes the complex conjugation in the scalar case and the matrix conjugation in the vector case.

We suppose that the Hermitian kernel $K(X,Y,X-Y)$ is smooth for $Y\ne X$ and admits the asymptotic expansion in homogeneous functions, as in Sect.2, with the leading homogeneity order $-\theta>-\Nb.$ Let $\G\subset \T^{\Nb}$ be a Lipschitz surface in $\T^{\Nb}$ of dimension $d<\Nb$ and codimension $\dF=\Nb-d.$ As measure $\m$, we take the Hausdorff measure $\Hc^{d}$ on the surface $\G,$ so, $\Mcc=\G.$ Locally, let the surface be described by the equation $y=\vf(x),$ in co-ordinates $X=(x,y)$, such that $x
\in \Uc\subset \R^d$, $y\in\R^{\dF}$ and $\vf$ is a vector-function with $\dF$ components. In these co-ordinates, the measure $\m$ is described by $\m(dx)=\det(\pmb{1}+(\nabla\vf)^*(\nabla\vf))^{\frac12}dx.$ Such measure  $\m$ belongs to $\Pc^{\a},$ $\a=d$ with constants $\Ac(\m),\Bc(\m)$ determined by the surface $\G$ globally.  We suppose that the kernel $K(X,Y,X-Y)$ with leading homogeneity order $-\theta$ satisfies the conditions of Theorems \ref{Th.Basic.Estimate}. We suppose also that  the weight function $F=F_1=F_2^*$ satisfies the conditions of
Theorem \ref{Thm.Est.fact} under some factorization as well.

In order to write down the asymptotic formulas for eigenvalues,
it is convenient to use the pseudodifferential representation: the integral operator with kernel $K(X,Y,X-Y)$ in $\R^\Nb$ is a pseudodifferential operator $\KF$ of order $-l=-\Nb+\theta.$ Following \cite{RT3}, we suppose that $l>\dF.$
The principal symbol of $\KF,$ $\kF_0(X,\X),$  is the regularized Fourier transform of the leading term $K_0(X,X,X-Y)$ of the kernel in the last variable, see details in \cite{BS3},\cite{BS5},\cite{RT1} and Sect.2 above.

We recall here the expression for the coefficient in the asymptotic formula, see \cite{RT2}, \cite{RT3}.

By the Rademacher theorem, at $\Hc^d$-almost all points $X$ on $\G,$ there exists the tangent $d$-dimensional space $\mathrm{T}_X\G.$ We identify it with the cotangent space $\mathrm{T}_X^*\G.$ Similarly, $\mathrm{N}_X\G$ denotes the normal $\dF$-dimensional space to $\G$ (identified  with the conormal one.) For such points $X,$ we define the symbol of order $-l+\dF<0$ on $\G$:
\begin{equation*}
    \rb_{0}(X,\x)=(2\pi)^{-\dF}\int_{\mathrm{N}_X\G}\kF_0(X;\x,\eta)d\eta,\, (X,\x)\in \mathrm{T}_X^*\G,
\end{equation*}
and the density
\begin{equation}\label{density}
    \r_{\KF}^{\pm}(X)=\int_{\mathrm{S}_X\G}|F(X)|^{2\s}\rb_{0}(X,\x)_{\pm}^{\s}d\x,\,\s =\frac{d}{l-\dF},
\end{equation}
where $\rb_{0}(X,\x)_{\pm}$ are the positive, resp., negative part of the symbol $\rb_{0}(X,\x),$
    \begin{equation*}
    \rb_{0}(X,\x)_{\pm}=\frac12\left(|\rb_{0}(X,\x)|\pm\rb_{0}(X,\x)\right).
    \end{equation*}

The eigenvalue asymptotics result for a \emph{smooth} kernel is the following.
\begin{thm}\label{Th.As.smooth}Under the above conditions, for the eigenvalues of $\Tb=\Tb[\m,\KF,F,{F}^*],$ the asymptotic formulas hold
\begin{equation}\label{As.smooth}
    n_{\pm}(\l,\Tb)\sim \l^{-2\s}\Ab_{\pm}, \,\l\to0,
\end{equation}
with coefficient
\begin{equation}\label{Coeff}
    \Ab_{\pm}=\frac{1}{d(2\pi)^{d-1}}\int_{\G}\r_{\KF}^{\pm}(X)\m(dX).
\end{equation}
\end{thm}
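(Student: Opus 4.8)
The plan is to reduce the eigenvalue asymptotics for the integral operator $\Tb$ to the known asymptotics for the corresponding pseudodifferential operator with singular weight on a Lipschitz surface, which is the setting of the authors' earlier work \cite{RT3}. Recall that $\Tb=\Tb[\m,\KF,F,F^*]$ is, by construction, the operator $(F\m)\KF(F^*\m)$, and $\KF$ is a polyhomogeneous pseudodifferential operator of order $-l=-\Nb+\theta$ on $\T^\Nb$. The first step is to invoke the factorization $\KF=\LF\LF^*$ with $\LF$ of order $-\g=-l/2$, so that $\Tb=\Tb_1\Tb_1^*$ with $\Tb_1=F\,\tb_\G\,\LF\colon L_2(\T^\Nb)\to L_{2,\m}$; then the nonzero eigenvalues of $\Tb$ coincide with those of $\Tb_1^*\Tb_1=\LF^*(\overline{F}\m)F\,\tb_\G\,\LF$ acting in $L_2(\T^\Nb)$, which is precisely a Birman--Schwinger-type pseudodifferential operator with the singular weight $|F|^2\m$ supported on $\G$. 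For this operator the eigenvalue asymptotics have been established in \cite{RT3} (the hypotheses $l>\dF$, $\m\in\Pc^d$ with the Lipschitz-surface constants, and the integrability of $|F|^{2\s}$ against $\m$ are exactly what is assumed here), giving $n_\pm(\l,\Tb)\sim\l^{-2\s}\Ab_\pm$ with $\Ab_\pm$ expressed through the principal symbol of the pseudodifferential operator restricted to $\G$.

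The second step is to identify the coefficient. The principal symbol of $\LF^*(\overline{F}\m)F\,\tb_\G\,\LF$ near $\G$, after the partial Fourier transform in the normal directions that produces the ``compressed'' symbol $\rb_0(X,\x)$ from $\kF_0(X,\x,\eta)$ — this is the content of the formula for $\rb_0$ stated just before the theorem — is $|F(X)|^2$ times $\rb_0(X,\x)$, and the Weyl-type asymptotic count over the cosphere bundle $\mathrm{S}_X\G$ of dimension $d$ yields the density $\r_{\KF}^\pm(X)$ in \eqref{density} and the normalizing constant $\tfrac{1}{d(2\pi)^{d-1}}$ in \eqref{Coeff}. Here one must be careful that the exponent $\s$ appearing in \eqref{density} is $\s=\tfrac{d}{l-\dF}$, matching $\s=\tfrac{\a}{2\g-\Nb+\a}$ with $\a=d$, $2\g=l$, $\Nb=d+\dF$; this is a routine bookkeeping check. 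The passage from the abstract asymptotic formula of \cite{RT3} to the explicit \eqref{Coeff} is essentially a transcription, since the authors have set up their notation ($\rb_0$, $\r_{\KF}^\pm$) precisely to make it so.

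The main obstacle — and the reason the theorem is stated separately rather than as an immediate corollary — is the \emph{reduction of the spectral problem for $\Tb_1$ (restriction to a measure) to a genuine pseudodifferential problem on $\R^\Nb$ or $\T^\Nb$ to which the machinery of \cite{RT3} applies}, together with the verification that the operator $\Tb$ is indeed realized as $\Tb_1\Tb_1^*$ in the precise functional-analytic sense spelled out in Section~2 (so that the nonzero spectra agree and no spurious essential spectrum appears). One must check that the trace operator $\tb_\G$ acting on $H^\g(\T^\Nb)$ is compact into $L_{2,\m}$ under the stated hypotheses (this follows from the Maz'ya trace theorems quoted earlier and the $\Pc^d$ condition with $2\g>\dF$, i.e. $l>\dF$), and that the leading term of the kernel is the only one contributing to the principal asymptotics — the lower-order homogeneous terms $K_j$, $j\ge1$, and the $\log$-terms, give contributions of order $o(\l^{-2\s})$ by Corollary~\ref{Cor.lower.order}. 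Finally, the Lipschitz (rather than smooth) regularity of $\G$ enters only through Rademacher's theorem, which guarantees the tangent/normal splitting $\Hc^d$-a.e., exactly as in \cite{RT2},\cite{RT3}; the asymptotic coefficient is insensitive to the null set of non-differentiability points. Assembling these pieces — the factorization, the reduction to the pseudodifferential Birman--Schwinger operator, the identification of its principal symbol on $\G$, and the citation of the asymptotic theorem of \cite{RT3} — completes the proof.
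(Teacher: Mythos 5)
Your proposal has a genuine gap at the very first step, and it is exactly the pitfall that the paper's own Remark~\ref{faulty way} warns against. You write ``the first step is to invoke the factorization $\KF=\LF\LF^*$ with $\LF$ of order $-\g=-l/2$, so that $\Tb=\Tb_1\Tb_1^*$ \ldots''. But this factorization requires $\KF$ to be a \emph{nonnegative} pseudodifferential operator, which is not among the hypotheses of Theorem~\ref{Th.As.smooth}: the kernel is only assumed Hermitian, $K(X,Y,X-Y)=K(Y,X,Y-X)^*$, so $\KF$ is self-adjoint but in general not sign-definite. If $\Tb$ really were $\Tb_1\Tb_1^*$ it would be nonnegative, forcing $n_-(\l,\Tb)\equiv 0$ and $\Ab_-=0$ — but the whole point of stating both signs in \eqref{As.smooth}--\eqref{Coeff} is that $\rb_0(X,\x)$ can change sign and both $\Ab_+$ and $\Ab_-$ can be nonzero. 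Consequently, the reduction to a Birman--Schwinger operator $\LF^*(|F|^2\m)\LF$ to which the asymptotic theorem of \cite{RT3} applies does not go through. The available substitute, factoring through a non-sign-definite middle piece such as $(1-\Delta)^{\g/2}\KF(1-\Delta)^{\g/2}$, destroys the $A^*VA$ structure that \cite{RT3} relies on, which is precisely why that route is blocked.

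The paper's actual argument is different: it first handles the case where $F$ extends to a smooth function on $\T^\Nb$, in which case $F$ and $F^*$ can be absorbed into the (still smooth, still Hermitian) kernel and the asymptotics follow from Theorem~6.4 of \cite{RT2} — a result for integral operators with smooth kernel on a Lipschitz surface that does \emph{not} require sign-definiteness. For general $F$ in the required integrability class, one approximates in the appropriate norm by smooth $F_{(\e)}$ (Lemma~6.1 of \cite{RSLap}), uses the singular-number estimate of Theorem~\ref{Th.Basic.Estimate}/\ref{Thm.Est.fact} to show the difference $\Tb[\m,\KF,F,F^*]-\Tb[\m,\KF,F_{(\e)},F_{(\e)}^*]$ has small asymptotic coefficient, and passes to the limit via the standard asymptotic perturbation lemma (Theorem~4.1 in \cite{BS2}). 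The observations you make about the cosphere-bundle density $\rb_0$, the identification $\s=d/(l-\dF)$, the $o$-smallness of lower-order homogeneous terms via Corollary~\ref{Cor.lower.order}, and the role of Rademacher's theorem are all correct and relevant, but the route to the result must go through \cite{RT2} plus approximation, not through a direct appeal to \cite{RT3}.
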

\begin{proof}The proof is similar to the reasoning in Theorem 6.2 in \cite{RT3} or Theorem 6.4 in \cite{RT2}; we explain the main steps, not going into technical details.

Suppose first that the   weight function $F$ is the restriction to  $\G$ of a function $\tilde{F}$  defined and smooth in $\T^\Nb$. Then we can incorporate $F$ and $F^*$ into the kernel $K,$ keeping it smooth, and now  the result is contained in Theorem 6.4 in \cite{RT2}.  Further, for a general weight $F$ satisfying the conditions of the Theorem, we can approximate it in the proper integral norm on $\G$ (namely, $L_{2\s,\m}, L^{2\Psi,\m}, L_{2,\m},$ depending on the case in Theorem \ref{Thm.Est.fact}) by a function $F_{(\e)}$ admitting  an extension as a smooth function in $\T^\Nb.$ The construction of such approximation is described in Lemma 6.1 in \cite{RSLap}. This approximation, by our Theorem 3.1, leads to the smallness of the coefficient in the asymptotic eigenvalue estimates for the difference $\Tb[\m,\KF,F,F^*]-\Tb[\m,\KF,F_{(\e)},F_{(\e)}^*].$ By the standard application of the asymptotic perturbation lemma,
see, e.g., Theorem 4.1 in \cite{BS2}, or Lemma 6.1 in \cite{RT2}, we pass to the limit in the eigenvalue asymptotic formula for $F_{(\e)}$ as $\e\to 0,$ which gives \eqref{Th.As.smooth}.
\end{proof}
\begin{remark}\label{faulty way}One might be tempted to refer directly to the eigenvalue asymptotics results in \cite{RT3}. However, the situation is not that simple. The eigenvalue asymptotics in \cite{RT3} was proved for operators of the form $\AF^*(V\m)\AF,$ where $\AF$ is a negative order pseudodifferential operator and $\m$ is a singular measure. Passing to integral operators, as in the present paper, we arrive at the study of eigenvalues  for $(V^{\frac12}\m)\AF\AF^*(V^{\frac12}\m)$; here  $\KF=\AF\AF^*$ is a \emph{nonnegative} pseudodifferential operator.  Therefore the analysis of the operator $(V^{\frac12}\m)\KF(V^{\frac12}\m)$ when $\KF$ is \emph{not sign-definite} does not pass through. Therefore we need to refer to the results in \cite{RT2} and perform an additional approximation.
\end{remark}
\begin{remark}\label{Rem.Matrix}The approach we use in this Theorem can be applied for operators acting in the spaces of vector-functions. While, formally, the required results  in \cite{RT1}, \cite{RT2}, \cite{RT3} are not presented for  the vector case, they follow automatically, without involving additional consideration. What is important here is that the basic results  on the eigenvalue asymptotics  for negative order pseudodifferential operators are already stated  and proved in \cite{BS5} for the vector case \emph{(}see also \cite{Ponge}\emph{)}.
The asymptotic formulas for the vector case coincide with
\eqref{As.smooth}, however the expressions \eqref{Coeff} for the  asymptotic coefficients $\Ab^{\pm}$ should be replaced by their matrix versions.

Namely, for the matrix case, the expression for the density in \eqref{density} should be replaced by
\begin{equation}\label{density.matrix}
    \r_{\KF}^{\pm}(X)=\int_{\mathrm{S}_X\G}\Tr\left\{\left[F(X)\rb_{0}(X,\x)F^*(X)\right]_{\pm}^{\s}\right\}d\x,
\end{equation}
with further calculation of the coefficients $\Ab^{\pm}$ by means of \eqref{Coeff}. In \eqref{density.matrix} the subscript $\pm$ means now the positive, resp., negative part of the corresponding matrix, so the expression on the right means that first the proper (positive or negative) part of the hermitian matrix  $F(X)\rb_{0}(X,\x)F^*(X)$ is found, then it is raised to the power $\s$ and then the trace of the resulting matrix is calculated to be further integrated over the cotangent sphere $\mathrm{S}_X\G$.
\end{remark}

\section{Multipliers in the eigenvalue asymptotics}
In Section \ref{asymptotics} above, the conditions imposed on the kernel $K(X,Y,X-Y)$ required it to be infinitely smooth for $X\ne Y$. The following reasoning involving multipliers allows us to reduce these smoothness conditions. Our results are not optimal since the regularity conditions imposed on the multiplier $\F$ may be weakened using the full strength of multiplier theory by M.Sh. Birman and M.Z. Solomyak. We restrict ourselves to a rather technically  simple setting to keep the paper more elementary.

Let $K(X,Y, X-Y)$ be a polyhomogeneous kernel of order $-\theta>-\Nb,$ smooth for $X\ne Y$. Suppose that $\G$ is a Lipschitz surface in $\T^\Nb$ of dimension $d\ge1$ and codimension $\dF\ge 1$ with surface measure $\m$. Let $F(X),$ $X\in\G$ be a weight function satisfying the condition of Theorem \ref{Th.As.smooth}. The function  $\F(X,Y)$, the multiplier, is supposed be a function in $C^{2m}$ in a neighborhood of $\G\times\G,$ $2m>2\mb+|\theta|.$
Consider the transformed, also Hermitian, kernel
\begin{equation*}
 K_{\F}(X,Y,X-Y)=\F(X,Y) K(X,Y,X-Y)\F(Y,X)^*.
\end{equation*}

 \begin{thm}\label{Mult.Asymp.Th}Under the above conditions, for the integral operator
   \begin{equation*}
    \Kb_{\F}: u(X)\mapsto \int_{\G}F(X)K_{\F}(X,Y,X-Y)F(Y)^* u(Y)\m(dY),
   \end{equation*}

 the eigenvalue asymptotics \eqref{As.smooth} holds
 with the coefficient $\Ab_{\pm}$ calculated by means of \eqref{Coeff},
 with $F(X)$ replaced by $F(X)\F(X,X).$
 \end{thm}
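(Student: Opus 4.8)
The plan is to reduce Theorem \ref{Mult.Asymp.Th} to the smooth case Theorem \ref{Th.As.smooth} by the same Taylor-expansion device already used in the proof of Theorem \ref{Thm.multiplic}, now carried out symmetrically in both multiplier factors so that Hermiticity is preserved throughout. First I would expand $\F(X,Y)$ around the diagonal: for fixed $X$, write $\F(X,Y)=\sum_{|\nb|<m}\F_{\nb}(X)(Y-X)^{\nb}+\F_{(m)}(X,Y)$ as in \eqref{Taylor}, with $\F_{\nb}(X)=(\nb!)^{-1}\partial_Y^{\nb}\F(X,X)$, and similarly for the second factor $\F(Y,X)^*$. Substituting both expansions into $K_\F=\F(X,Y)K\F(Y,X)^*$ and multiplying out produces a finite sum of terms: the leading term $\F(X,X)K(X,Y,X-Y)\F(X,X)^*$, a collection of "mixed" terms each carrying a factor $(X-Y)^{\nb}$ with $|\nb|\ge1$ coming from one of the two Taylor tails, and a remainder in which at least one factor $\F_{(m)}$ (or $\F_{(m)}^*$) appears, hence which vanishes to order $m$ at the diagonal.

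Next I would handle each group. For the leading term, incorporate the bounded factor $\F(X,X)$ into the weight: the operator with kernel $F(X)\F(X,X)\,K(X,Y,X-Y)\,\F(X,X)^*F(X)^*$ is again of the form covered by Theorem \ref{Th.As.smooth}, with $F$ replaced by $F\,\F(\cdot,\cdot)|_{\text{diag}}$, since $\F(X,X)$ is bounded and smooth along $\G$ — indeed it is the restriction of a $C^{2m}$ function on $\T^\Nb$ — so $F(X)\F(X,X)$ lies in the same weight space ($L_{2\s,\m}$, $L^{\Psi,\m}$, or $L_{2,\m}$) as $F$. This gives precisely the asymptotics \eqref{As.smooth} with coefficient \eqref{Coeff} computed for $F(X)\F(X,X)$, which is the claimed answer. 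For a mixed term, the kernel is $K(X,Y,X-Y)(X-Y)^{\nb}$ times bounded functions of $X$ and $Y$ (the various $\F_{\nb}$), hence smooth off the diagonal and of homogeneity $\theta+|\nb|$, i.e. strictly higher than $\theta$; incorporating the bounded prefactors into the weights and applying Corollary \ref{Cor.lower.order} (with the lower-order pseudodifferential operator of order $-l'=-l-|\nb|<-l$) yields $n_\pm(\l,\cdot)=o(\l^{-2\s})$, so these terms do not affect the leading asymptotics. For the remainder term, $\F_{(m)}$ is $C^{2m}$ off the diagonal and vanishes to order $m$ along it, so the product with $K$ (whose singularity is of order $-\theta>-\Nb$) is a kernel of smoothness class $C^{m-|\theta|}$ across the diagonal; choosing $2m>2\mb+|\theta|$ as assumed makes this smoothness exceed $2\Nb$, so Proposition \ref{Prop.smooth} (applied with $\m_1=\m_2$ a measure absorbing $F\m$ and the bounded factors) gives singular numbers decaying faster than any fixed power, in particular $o(\l^{-2\s})$.

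Finally I would assemble the pieces: writing $\Kb_\F$ as the sum of the leading operator and the error operators, the asymptotic perturbation lemma (Theorem 4.1 in \cite{BS2}, or Lemma 6.1 in \cite{RT2}) applies — the leading operator has the asymptotics \eqref{As.smooth} with the stated coefficient, and all error operators have counting functions that are $o(\l^{-2\s})$ — so $n_\pm(\l,\Kb_\F)\sim\l^{-2\s}\Ab_\pm$ with $\Ab_\pm$ given by \eqref{Coeff} for the modified weight $F(X)\F(X,X)$. One point requiring a little care, and the main technical obstacle, is the bookkeeping that keeps the construction Hermitian at every stage: the Taylor expansion must be organized so that the leading term is manifestly of the form $(\text{weight})\,K\,(\text{weight})^*$ and the mixed/remainder terms, while individually perhaps not Hermitian, enter only as a perturbation whose symmetrization still has the $o(\l^{-2\s})$ bound — this is automatic once one groups a term with its adjoint and uses that $n_\pm$ of a sum is controlled, via Ky Fan, by the sum of the counting functions. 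In the matrix case the same argument goes through verbatim, with left and right multipliers $\F(X,Y)$ and $\F(Y,X)^*$ expanded separately and the coefficient computed from \eqref{density.matrix} with $F(X)$ replaced by $F(X)\F(X,X)$.
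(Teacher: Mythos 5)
Your proposal follows essentially the same route as the paper: symmetric Taylor expansion of both multiplier factors $\F(X,Y)$ and $\F(Y,X)^*$ at the diagonal, absorption of the leading diagonal values into the weight so as to reduce to Theorem \ref{Th.As.smooth}, Corollary \ref{Cor.lower.order} for the higher-homogeneity mixed terms after the bounded derivative factors are absorbed into the weights, Proposition \ref{Prop.smooth} for the remainder, and assembly by the asymptotic perturbation lemma. One small slip to correct: the leading term of the product expansion is $\F(X,X)\,K(X,Y,X-Y)\,\F(Y,Y)^*$ (with $\F(Y,Y)^*$, not $\F(X,X)^*$), so the operator kernel regroups as $\bigl(F(X)\F(X,X)\bigr)\,K(X,Y,X-Y)\,\bigl(F(Y)\F(Y,Y)\bigr)^*$ — which is exactly what makes your final claim, that $F$ is replaced by $F(\cdot)\F(\cdot,\cdot)$ on the diagonal, correct.
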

 \begin{proof} The reasoning follows the pattern used in Theorem \ref{Thm.multiplic}.
 Consider the starting fragment of the Taylor  expansion of the multiplier  $\F(X,Y)$ in $X-Y$ variable at the point $Y=X,$
 \begin{equation}\label{Taylor F}
 \F(X,Y)=\F(X,X)+\sum_{1\le|\nb_Y|\le m}
 (\nb_X!)^{-1}\F_Y^{(\nb_Y)}(X,X)(Y-X)^{\nb_Y}+\F_{(m)}(X,Y),
 \end{equation}
 where  the remainder $\F_{(m)}(X,Y)$ is a function in $C^{m}$ satisfying
 $\F_{(N)}(X,Y)=o(|X-Y|^{m+|\theta|}),$ as  $X-Y\to 0.$
 Similarly, $\F(Y,X)^*$ expands as
 \begin{equation}\label{Taylor F*}
 \F(Y,X)^*=\F(Y,Y)^*+\sum_{1\le|\nb_X|\le m}
 (\nb_X!)^{-1}\F_X^{(\nb_X)}(Y,Y)^*(X-Y)^{\nb_X}+\F_{(m)}(Y,X)^*
 \end{equation}
 Using \eqref{Taylor F}, \eqref{Taylor F*}, we represent $K_{\F}(X,Y,X-Y)$ as the sum of three terms
 \begin{equation}\label{KF 3 terms}
  K_{\F}(X,Y,X-Y)=\F(X,X)K(X,Y,X-Y)\F(Y,Y)^*+ K_{\F}^{(m)}+\widetilde{K_{\F}^{(m)}},
 \end{equation}
 where
 \begin{gather}\label{KFN}
    K_{\F}^{(m)}(X,Y,X-Y)=\\\nonumber \sum_{\overset{|\nb_X|,|\nb_Y|\le m,}{ |\nb_X|+|\nb_Y|>0}}
    \frac{1}{\nb_X!}\frac{1}{\nb_Y!}\F_Y^{(\nb_Y)}(X,X)(Y-X)^{\nb_Y}
    K(X,Y,X-Y)(\F^*)_X^{(\nb_X)}(Y,Y)(Y-X)^{\nb_X},
 \end{gather}
 and $\widetilde{K_{\F}^{(m)}}$ contains the remainder terms in the expansions \eqref{Taylor F}, \eqref{Taylor F*}.
 The first term in \eqref{KFN}, the  product  $F(X)\F(X,X)K(X,Y,X-Y)\F(Y,Y)^*F^*(Y)$ can be re-grouped as $(F(X)\F(X,X))K(X,Y,X-Y)(\F(Y,Y)^*F(Y)^*).$ This means that the corresponding integral operator can be considered as the one with the same smooth kernel $K(X,Y,X-Y)$  but with a different weight function $F(X)\F(X,X)$  instead of $F(X).$ Since the function $\F(X,X)$ is bounded, the new weight, $F(X)\F(X,X)$, belongs to the same  space of integrable functions as $F(X).$ Therefore, to the integral operator under consideration, the eigenvalue asymptotics theorem can be applied, which gives the declared expression for the asymptotic coefficients.

 In the second term in \eqref{KF 3 terms}, a single summand, up to a constant, has the form
 \begin{equation}\label{one term}
    \F_Y^{(\nb_Y)}(X,X)(Y-X)^{\nb_Y}
    K(X,Y,X-Y)(\F^*)_X^{(\nb_X)}(Y,Y)(X-Y)^{\nb_X}
 \end{equation}
 Here, the product $K(X,Y,X-Y)(Y-X)^{\nb_Y+\nb_Y}$ is smooth for $X\ne Y$ and has a weaker singularity than $K$ as $X-Y\to 0$. The derivatives $\F_Y^{(\nb_Y)}(X,X)$, $\F_X^{(\nb_X)}(Y,Y)$ are bounded and can be incorporated in the weight functions $F(X)$ and $F^*(Y)$. Therefore, for the corresponding integral operator, by Corollary \ref{Cor.lower.order}, the singular numbers estimate holds, with a faster eigenvalues decay rate.

 Finally, in the third term in  \eqref{KF 3 terms}, in each summand, the remainder
 in the Taylor expansion of the multiplier $\F$ is present. Such remainder has zero of high order at $X=Y,$ together with derivatives. Therefore, the product of $K(X,Y,X-Y)$ with such remainder has bounded derivatives of sufficiently high order  and by Proposition \ref{Prop.smooth}, the corresponding operator  has arbitrarily fast decaying  singular numbers, as soon as the order of the derivatives is sufficiently high.
 \end{proof}
 \begin{remark}\label{multi.matrix} Similar to Remark \ref{Rem.Matrix},
 the above result on the multipliers in eigenvalue asymptotics can be automatically extended to the vector case. Here the multiplier $\F(X,Y)$ is a $\kb\times\kb$ matrix and the transformed kernel is
 \begin{equation}\label{Mult.kernel}
  K_\F(X,Y,X-Y)=\F(X,Y)K(X,Y,X-Y)\F(Y,X)^*.
 \end{equation}
 The reasoning in proving the asymptotic formula for eigenvalues is the same as in Theorem \ref{Mult.Asymp.Th}.
 \end{remark}


\begin{thebibliography}{99}
\bibitem{Abels}H. Abels, C. Pfeuffer. \emph{Characterization of non-smooth pseudodifferential
operators.} Journal of Fourier Analysis and Applications, \textbf{24}(2):317--415, 2018.
\bibitem{BS1}M.Sh. Birman, M.Z. Solomyak, \emph{Asymptotics of the spectrum of weakly polar integral operators.} (Russian) Izv. Akad. Nauk SSSR Ser. Mat. \textbf{34}, no. 5 (1970), 1142--1158.  English translation in: Math. USSR, Izvestija \textbf{4} (1970), 1151--1168 (1971).
\bibitem{BS2}M.Sh. Birman, M.Z. Solomyak. \emph{The principal term of the spectral asymptotics for ``non-smooth'' elliptic problems.} (Russian) Funktsional. Anal. i Prilozhen. \textbf{4}, no. 4 (1970), 1--13. English translation in: Functional Analysis Appl. \textbf{4} (1970), 265--275 (1971).
\bibitem{BS3}M.Sh. Birman, M.Z. Solomyak, \emph{ Asymptotic behavior of the spectrum of pseudodifferential operators with anisotropically homogeneous symbols. I.} (Russian) Vestnik Leningrad. Univ. \textbf{13}, no. 3 (1977), 13--21.  English translation in: Vestn. Leningr. Univ., Math. \textbf{10} (1982), 237--247;   II. (Russian) Vestnik Leningrad. Univ. Mat. Mekh. Astronom. 13, no. 3 (1979), 5--10. English translation in: Vestn. Leningr. Univ., Math. \textbf{12} (1980), 155--161.
 \bibitem{BS5}   M.Sh. Birman, M.Z.  Solomyak,  \emph{Estimates for the singular numbers of integral operators.} (Russian) Uspehi Mat. Nauk \textbf{32}, no. 1(193) (1977), 17--84. English translation in: Russ. Math. Surveys \textbf{32}, no. 1 (1977), 15--89.
    \bibitem{KLP} M. Karpukhin, J. Lagace. I. Polterovich,\emph{ Weyl's law for the Steklov problem on surfaces with rough boundary.} \texttt{Arxiv:2204.05294 }
\bibitem{Kost.MSb}G.P. Kostometov. \emph{The asymptotic behavior of eigenvalues of integral operators with singularity at the diagonal.} (Russian). Matem. Sbornik.\textbf{ 94(136)} (3) (1974) 444--451. Translation in :Sb. Math., \textbf{23}:3 (1974), 417--424
\bibitem{KostSol}G.P. Kostometov, M.Z. Solomyak.
\emph{Estimates of the singular values of integral operators with a weak singularity.} (Russian) Vestnik Leningrad. Univ. 1971 no. 1, 28--39.
\bibitem{MazBook}
 V.G. Maz'ya.  Sobolev Spaces  with Applications to Elliptic Partial Differential Equations. Springer. Berlin, Heidelberg. 2011.
 \bibitem{Ponge}R. Ponge. Connes' integration and Weyl's laws. \texttt{arXiv:2107.13605}.
\bibitem{RConnes} G. Rozenblum. \emph{Eigenvalues of singular measures and Connes noncommutative integration.}  J. Spectral Theory. \textbf{12} 1 (2022), 259--300 \texttt{ArXiv:2103.02067}.
    \bibitem{RSFA}G. Rozenblum, E. Shargorodsky, \emph{Eigenvalue asymptotics for weighted polyharmonic operator
with a singular measure in the critical case.}  Functional Analysis and its Applications. \textbf{53} 2, 2021, 113--117.
   \bibitem{RSLap} G. Rozenblum, E. Shargorodsky.\emph{Eigenvalue estimates and asymptotics for
weighted pseudodifferential operators with
singular measures in the critical case.}  Partial Differential Equations, Spectral Theory, and Mathematical Physics. The Ari Laptev Anniversary volume, EMS, 2021, 331--354. \texttt{ArXiv:2011.14877.}
\bibitem{RT1} G. Rozenblum, G. Tashchiyan, \emph{Eigenvalue asymptotics for potential type operators on Lipschitz surfaces.} Russian J. Math. Phys. \textbf{13} 3 (2006) 326--339.
\bibitem{RT2} G. Rozenblum, G. Tashchiyan, \emph{Eigenvalue asymptotics for potential type operators on Lipschitz surfaces of codimension greater than 1.} Opuscula Math. \textbf{38}
5 (2018) 733--758.
 \bibitem{RT3} G. Rozenblum, G. Tashchiyan. \emph{Eigenvalues of the Birman--Schwinger operator for singular measures: the noncritical case.} \texttt{Arxiv: 2107.04682}.
        \bibitem{TaylorBook}M. Taylor. Tools for PDE. Pseudodifferential operators, paradifferential operators, and layer potentials. Mathematical Surveys and Monographs, 81. American Mathematical Society, Providence, RI, 2000.
    \bibitem{Taylor}M. Taylor. Partial Differential Equations II. Qualitative Studies of Linear
Equations. Springer. 2011.
\end{thebibliography}
\end{document}